\documentclass[11pt,a4paper]{amsart}
\usepackage{amssymb,amsmath,amsfonts}

\textwidth=16.500cm \textheight=22.000cm \topmargin=0.00cm
\oddsidemargin=0.00cm \evensidemargin=0.00cm \headheight=14.4pt
\headsep=1cm \numberwithin{equation}{section}
\hyphenation{semi-stable} \emergencystretch=10pt

\newtheorem{theorem}{Theorem}[section]
\newtheorem{lemma}[theorem]{Lemma}
\newtheorem{proposition}[theorem]{Proposition}
\newtheorem{corollary}[theorem]{Corollary}

\theoremstyle{definition}

\theoremstyle{notation}
\newtheorem{notation}[theorem]{Notation}
\theoremstyle{remark}

\newcommand{\fm}{\mathfrak{m}}

\newcommand{\fp}{\frak{p}}

\newcommand{\fa}{\frak{a}}
\newcommand{\fb}{\frak{b}}

\title[on vanishing and cofiniteness of generalized local cohomology]
 {on vanishing and cofiniteness of generalized local cohomology modules}

\bibliographystyle{amsplain}

    \author[S. H. Hassanzadeh]{S. H. Hassanzadeh $^{1}$}
\address{$^{1}$Institut de mathematiques, Universite Paris 6,  4,
Place Jussieu, F-75252 Paris, Cedex 05, France. And Faculty of
Mathematical science and computer engineering, Teacher Training
University, 599 Taleghani Avenue, 1561836314 Tehran, Iran; Fax:
+98(021)77602988. } \email{h\_hassanzadeh@tmu.ac.ir}

    \author[A. Vahidi]{A. Vahidi $^{2}$}

 \address{$^{2}$Faculty of Mathematical science and computer engineering,
Teacher Training University, 599 Taleghani Avenue, 1561836314
Tehran, Iran; Fax: +98(021)77602988.} \email{vahidi.ar@gmail.com}

\dedicatory{The second author would like to dedicate this paper\\
to the dear and loving memory of his late mother\\
who battled cancer and lost the fight in October 2007.}

\keywords{Generalized local cohomology, Cofiniteness, Spectral
sequence}

\subjclass[2000]{13D45, 14B15}


\begin{document}
\maketitle

\begin{abstract}

In this paper,  some results on vanishing and non-vanishing of
generalized local cohomology modules are presented and some
relations between those modules and,  $Ext$ and ordinary local
cohomology modules are studied. Also, several cofiniteness
propositions for generalized local cohomology modules are
established which, among other things, provide an alternative answer
to a question in [Y2].
\end{abstract}




\section{Introduction}
The notion of generalized local cohomology was first introduced by
J. Herzog in his habilitationss [H] and then continued by N. Suzuki
[S], S. Yassemi [Y] and some other authors. They studied some basic
duality theorems, vanishing and other properties of generalized
local cohomology modules which also generalize several known facts
about $Ext$ and ordinary local cohomolgy modules.

Throughout the paper, $R$ is a commutative Noetherian ring  with
identity, $\frak{a}$ is an ideal of $R$. 
 For each $i\geq 0,$ the generalized local cohomology functor $H^{i}_{\frak{a}}(-,-)$ is
defined, for all $R$-modules $M$ and $N$, by
$$H^{i}_{\frak{a}}(M,N)=\displaystyle\lim_{\stackrel{\longrightarrow}{n}}Ext^{i}_{R}(M/\frak{a}^{n}M,N).$$

In section 2, we study the vanishing and non-vanishing of
generalized local cohomology modules. The crucial point is Theorem
2.2 which is interesting in itself as it provides a vanishing
proposition for $Ext$ and ordinary local cohomology modules. Then
Proposition 2.3 and Corollary 2.4 show what implied from this
theorem about vanishing and non-vanishing  of generalized local
cohomology modules. In Proposition 2.6, we extend Theorem 2.2 for
Artinian property of local cohomology modules which gives a new
proof for [Mel; Theorem 5.5]. Finally, in this section, Proposition
2.8 shows how the generalized local cohomology modules behave
sharply at the non-vanishing bounds for $Ext$ and local cohomology
modules. We also apply this proposition in section 3.

In section 3, we study the cofiniteness property of generalized
local cohomology modules. In the first proposition we show that for
finite $R$-modules $M$ and $N$ with $pd(M)<\infty$, the $R$-module
$H^{pd(M)+dim(N)}_{\frak{a}}(M,N)$ is not only Artinian, as shown by
A. Mafi [M; Theorem 2.9], but also $\frak{a}$-cofinite. Then in
Proposition 3.4 we generalize [Mel; Corollary 2.5] for a Serre
subcategory of the category of $R$-modules. Furthermore we
generalize and give a new proof to [Mel; Corollary 3.14] for
arbitrary $R$-modules in Proposition 3.5. These two propositions can
also be employed to prove Proposition 3.6 which presents some finite
generalized local cohomology modules. The main theorem of section 3
is Theorem 3.7 which provides some sufficient conditions for
generalized local cohomology modules to be $\frak{a}$-cofinite. Even
though the assumptions in this theorem are apparently strong, they
are practically useful as we can see in the next corollaries which
demonstrate some new facts and represent some older facts about
$\frak{a}$-cofiniteness of generalized local cohomology modules.



\section{Vanishing and non-vanishing results}

In this section, we investigate some basic properties  of
generalized local cohomology modules. Most properties of generalized
local cohomology modules inherit from the spectral sequence
$$E^{p,q}_{2}:=Ext^{p}_{R}(M,H^{q}_{\frak{a}}(X))
_{\stackrel{\Longrightarrow}{p}} H^{p
+q}_{\frak{a}}(M,X)\hspace{4cm}(*)$$ in particular, when we put this
spectral sequence in a double complex. All of the facts in this
section can be proved by means of the method that we apply in
Theorem 2.2 but to shorten the other proofs, we just mention the
corresponding spectral sequence's property.

 The following lemma is an immediate
consequence of $(*)$ or can be drawn directly, applying the basic
properties of generalized local cohomology such as [DST; Lemma
2.1(i)].


\begin{lemma} Suppose that $M$ is a finite $R$-module and that $X$ is an arbitrary
$R$-module. Then
\begin{enumerate}
  \item[(i)]{$\Gamma_{\frak{a}}(M,X)=0$ whenever $\Gamma_{\frak{a}}(X)=0$}.
  \item[(ii)]{$\Gamma_{\frak{a}}(M,X)=H^{1}_{\frak{a}}(M,X)=0$  whenever
  $\Gamma_{\frak{a}}(X)=H^{1}_{\frak{a}}(X)=0$}.
\end{enumerate}
\end{lemma}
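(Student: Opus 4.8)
The plan is to derive both parts from the spectral sequence $(*)$, namely $E^{p,q}_{2} = \Ext^{p}_{R}(M, H^{q}_{\fa}(X)) \Rightarrow_{p} H^{p+q}_{\fa}(M,X)$, exploiting the fact that this is a first-quadrant cohomological spectral sequence so that each $H^{n}_{\fa}(M,X)$ carries a finite filtration whose graded pieces are subquotients of the $E^{p,q}_{\infty}$ with $p+q=n$, and each $E^{p,q}_{\infty}$ is a subquotient of $E^{p,q}_{2}$. For part (i), the hypothesis $\Gamma_{\fa}(X)=0$ means $H^{0}_{\fa}(X)=0$, hence $E^{p,0}_{2} = \Ext^{p}_{R}(M, H^{0}_{\fa}(X)) = 0$ for every $p \geq 0$. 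The only term with $p+q=0$ in the first quadrant is $E^{0,0}$, which is a subquotient of $E^{0,0}_{2}=0$; therefore $H^{0}_{\fa}(M,X) = \Gamma_{\fa}(M,X) = 0$.

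For part (ii), assume in addition $H^{1}_{\fa}(X)=0$, so now both $E^{p,0}_{2}=0$ and $E^{p,1}_{2} = \Ext^{p}_{R}(M, H^{1}_{\fa}(X)) = 0$ for all $p$. Part (i) already gives $\Gamma_{\fa}(M,X)=0$. For $H^{1}_{\fa}(M,X)$, the contributing terms have $p+q=1$ in the first quadrant, namely $E^{0,1}$ and $E^{1,0}$; both are subquotients of $E^{0,1}_{2}=0$ and $E^{1,0}_{2}=0$ respectively, so the filtration of $H^{1}_{\fa}(M,X)$ has all graded pieces zero and hence $H^{1}_{\fa}(M,X)=0$.

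The only real subtlety — hardly an obstacle — is to cite the correct form of the spectral sequence convergence: one needs that, for a first-quadrant spectral sequence, vanishing of $E^{p,q}_{2}$ along the antidiagonal $p+q=n$ forces the abutment in degree $n$ to vanish, which follows since $E^{p,q}_{\infty}$ is a subquotient of $E^{p,q}_{2}$ (the differentials in and out of each spot can only shrink the term) and $H^{n}$ is a finite iterated extension of these $E^{p,q}_{\infty}$. Alternatively, as the statement of the lemma notes, one can bypass the spectral sequence entirely: part (i) is immediate because $\Gamma_{\fa}(M,X) = \varinjlim_{n} \Hom_{R}(M/\fa^{n}M, X)$ and the union $\bigcup_n (0 :_X \fa^n) = \Gamma_{\fa}(X) = 0$ forces every such $\Hom$ into $\Gamma_{\fa}(X)$, while part (ii) follows from the analogous left-exactness-type statement recorded in [DST; Lemma 2.1(i)]. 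I would present the spectral sequence argument as the main proof since $(*)$ is the organizing tool for the whole section, and remark that the direct argument also works.
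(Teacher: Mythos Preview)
Your proof is correct and follows exactly the approach the paper indicates: the paper does not give a detailed proof but states that the lemma ``is an immediate consequence of $(*)$ or can be drawn directly, applying the basic properties of generalized local cohomology such as [DST; Lemma 2.1(i)],'' and your argument is precisely a spelling-out of the spectral sequence route (with the direct route noted as an alternative). Nothing needs to be changed.
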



\begin{theorem}
Let $X$ be an arbitrary $R$-module (not necessarily finite) and $t$
be a positive integer. Then the following statements are equivalent:
       \begin{enumerate}
          \item[(i)]{$H^{i}_{\frak{a}}(X)=0$ for all $0 \leq i <t.$}
          \item[(ii)]{$Ext^{i}_{R}(R/\frak{a},X)=0$ for all $0 \leq i < t.$}
       \end{enumerate}
\end{theorem}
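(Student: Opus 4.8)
The plan is to prove the equivalence by induction on $t$, using the long exact sequences that come from a short exact sequence built to "peel off" the submodule $\Gamma_{\frak{a}}(X)$. The base case $t=1$ is exactly the statement that $\Gamma_{\frak{a}}(X)=0$ if and only if $\Hom_R(R/\frak{a},X)=0$; one direction is clear since $\Hom_R(R/\frak{a},X)\subseteq\Gamma_{\frak{a}}(X)$, and for the converse, if $0\neq x\in\Gamma_{\frak{a}}(X)$ then $\frak{a}^n x=0$ for some $n$, and choosing $n$ minimal produces an element of $\Gamma_{\frak{a}}(X)$ annihilated by $\frak{a}$, i.e. a nonzero element of $\Hom_R(R/\frak{a},X)$.

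For the inductive step, assume the equivalence for $t-1$ and suppose $t\geq 2$. Set $\bar X = X/\Gamma_{\frak{a}}(X)$. The key point is that $\Gamma_{\frak{a}}(\bar X)=0$, so the short exact sequence
\[
0 \lo \Gamma_{\frak{a}}(X) \lo X \lo \bar X \lo 0
\]
yields, after applying $\Gamma_{\frak{a}}(-)$, isomorphisms $H^i_{\frak{a}}(X)\cong H^i_{\frak{a}}(\bar X)$ for all $i\geq 1$ (since $H^i_{\frak{a}}(\Gamma_{\frak{a}}(X))=0$ for $i\geq 1$, as $\Gamma_{\frak{a}}(X)$ is $\frak{a}$-torsion). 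Thus condition (i) for $X$ at level $t$ is equivalent to: $\Gamma_{\frak{a}}(X)=0$ and $H^i_{\frak{a}}(\bar X)=0$ for $1\leq i<t$, i.e. $H^i_{\frak{a}}(\bar X)=0$ for $0\leq i<t$. One then wants the analogous reduction for the $\Ext$ side. From the same short exact sequence, apply $\Hom_R(R/\frak{a},-)$; since $\Gamma_{\frak{a}}(X)$ is $\frak{a}$-torsion and finitely related issues aside, $\Ext^i_R(R/\frak{a},\Gamma_{\frak{a}}(X))$ need not vanish in general, so here one must be a little careful — but under the assumption that $\Gamma_{\frak{a}}(X)=0$ (which is part of either hypothesis once the base case is invoked) we simply have $X=\bar X$ and the reduction is automatic. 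So the cleanest route is: first dispose of $i=0$ by the base case, reducing to the case $\Gamma_{\frak{a}}(X)=0$, and then induct.

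Concretely, having reduced to $\Gamma_{\frak{a}}(X)=0$, choose an injective hull and form $0\to X\to E\to Y\to 0$ with $E$ injective; since $\Gamma_{\frak{a}}(X)=0$ we also get $\Gamma_{\frak{a}}(E)=0$ (as $\Gamma_{\frak{a}}(E)$ is a direct summand of the injective $E$ and embeds in... more directly, $\Gamma_{\frak a}(E)$ injects into $X$ via the standard argument, hence is zero), so $H^{i+1}_{\frak{a}}(X)\cong H^i_{\frak{a}}(Y)$ and $\Ext^{i+1}_R(R/\frak{a},X)\cong\Ext^i_R(R/\frak{a},Y)$ for $i\geq 0$. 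Also $\Gamma_{\frak{a}}(Y)\cong H^1_{\frak{a}}(X)$ and $\Hom_R(R/\frak{a},Y)\cong\Ext^1_R(R/\frak{a},X)$. Now apply the base-case equivalence to $Y$: condition (i) at level $t$ for $X$ says $H^i_{\frak{a}}(X)=0$ for $i<t$, equivalently (using $\Gamma_{\frak a}(X)=0$ and the shift) $H^j_{\frak{a}}(Y)=0$ for $j<t-1$; by the induction hypothesis applied to $Y$ this is equivalent to $\Ext^j_R(R/\frak{a},Y)=0$ for $j<t-1$, which by the shift is exactly $\Ext^i_R(R/\frak{a},X)=0$ for $i<t$, i.e. condition (ii). The main obstacle, as flagged above, is handling the $\Ext^i_R(R/\frak{a},-)$ against the torsion submodule cleanly; sidestepping it by treating $i=0$ separately and only then passing to the dimension-shift via an injective resolution is what makes the induction go through without friction.
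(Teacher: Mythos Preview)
Your proof is correct and follows essentially the same approach as the paper: handle $t=1$ via the identification of $\Hom_R(R/\fa,X)$ with the $\fa$-socle of $\Gamma_{\fa}(X)$, then for the inductive step use the base case to reduce to $\Gamma_{\fa}(X)=0$, embed $X$ in its injective hull $E$ (so $\Gamma_{\fa}(E)=0$ by essentiality), and dimension-shift to the cokernel. The detour through $\bar X = X/\Gamma_{\fa}(X)$ is unnecessary, as you yourself note, and your parenthetical justification that $\Gamma_{\fa}(E)=0$ is a bit garbled (it is not that $\Gamma_{\fa}(E)$ injects into $X$, but that any nonzero submodule of $E$ meets $X$ nontrivially, forcing $\Gamma_{\fa}(E)\cap X\subseteq\Gamma_{\fa}(X)=0$); otherwise the argument matches the paper's.
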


\begin{proof}

 We prove by using induction on t. Let $t=1$. Consider the
isomorphism $Hom_{R}(R/\frak{a},X)\cong
Hom_{R}(R/\frak{a},\Gamma_{\frak{a}}(X))$; so that we have an
equivalent condition that $Hom_{R}(R/\frak{a},X)=0$ if and only if
$\Gamma_{\frak{a}}(X)=0,$ since $Supp(\Gamma_{\frak{a}}(X))\subseteq
V(\fa).$ Hence the assertion holds in this case.

Suppose that $t>1$ and that we have proved the theorem for $t-1$ and
all $R$-modules $X$. Let $Y$ be an $R$-module and consider the exact
sequence
$$0 \rightarrow Y \rightarrow E(Y) \rightarrow L \rightarrow 0,$$
 where $E(Y)$ is an injective hull of $Y$ and $L$ is an $R$-module.
 Applying the derived functors of $Hom_{R}(R/\frak{a},-)$ and $\Gamma_{\frak{a}}(-)$ to the above short exact
 sequence, we have exact sequences and isomorphisms:
$$0 \rightarrow Hom_{R}(R/\frak{a},Y) \rightarrow Hom_{R}(R/\frak{a},E(Y))\rightarrow
Hom_{R}(R/\frak{a},L)\rightarrow
Ext^{1}_{R}(R/\frak{a},Y)\rightarrow 0,$$
$$Ext^{i-1}_{R}(R/\fa,L)\cong Ext^{i}_{R}(R/\fa,Y)$$
for all $i>1,$ and
$$0\rightarrow \Gamma_{\frak{a}}(Y)\rightarrow
\Gamma_{\frak{a}}(E(Y))\rightarrow \Gamma_{\frak{a}}(L)\rightarrow
H^{1}_{\frak{a}}(Y)\rightarrow 0,$$
$$H^{i-1}_{\fa}(L)\cong H^{i}_{\fa}(Y)$$
for all $i>1.$

Now, assume that $H^{i}_{\fa}(Y)=0$ for all $0 \leq i < t$. Since
$\Gamma_{\frak{a}}(Y)=0$, we have $\Gamma_{\frak{a}}(E(Y))=0$ from
the fact that $E(Y)$ is an essential extension of $Y$; so that
$Hom_{R}(R/\frak{a},Y) \subseteq Hom_{R}(R/\frak{a},E(Y))=0$. Hence
we have, by the above isomorphisms and exact sequences,
$R$-isomorphisms:
$$Ext^{i-1}_{R}(R/\fa,L)\cong Ext^{i}_{R}(R/\fa,Y)\quad and \quad
H^{i-1}_{\fa}(L)\cong H^{i}_{\fa}(Y)$$ for all $i>0.$

Now, $H^{i}_{\fa}(L)=0$ for all $0\leq i <t-1$ form the latter
isomorphisms. Hence $Ext^{i}_{R}(R/\fa,L)=0$ for all $0\leq i <t-1$
by the induction hypothesis on $L$. Again from the above
isomorphisms, $Ext^{i}_{R}(R/\fa,Y)=0$ for all $0\leq i <t$.

Conversely, as the above proof, we see that if
$Ext^{i}_{R}(R/\fa,Y)=0$ for all $0\leq i <t$, then
$H^{i}_{\fa}(Y)=0$ for all $0 \leq i <t.$ We leave the proof to the
reader. The proof is completed.
\end{proof}

\begin{proposition}
 Let M be a finitely generated R-module, X be an R-module, and t be a positive integer.
  If $R$-module $X$ satisfies the equivalent conditions in Theorem 2.2 for the integer $t$,
then   we have
       \begin{enumerate}
          \item[(i)]{$H^{i}_{\frak{a}}(M,X)=0$ for all $0 \leq i < t.$}
          \item[(ii)]{$H^{t}_{\frak{a}}(M,X)\cong Hom_{R}(M,H^{t}_{\frak{a}}(X))$.}
          \item[(iii)]{$Ext^{t}_{R}(R/\frak{a},X)\cong Hom_{R}(R/\frak{a},H^{t}_{\frak{a}}(X)).$}
       \end{enumerate}
\end{proposition}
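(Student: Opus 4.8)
The plan is to exploit the spectral sequence $(*)$, namely $E_2^{p,q} = \Ext^p_R(M, H^q_{\frak a}(X)) \Rightarrow H^{p+q}_{\frak a}(M,X)$, under the hypothesis that $H^q_{\frak a}(X) = 0$ for $0 \le q < t$ (which is the first of the equivalent conditions in Theorem 2.2). Because the bottom $t$ rows of the $E_2$-page vanish, the spectral sequence is concentrated in the region $q \ge t$, and this immediately forces the abutment $H^n_{\frak a}(M,X)$ to vanish for $n < t$, giving (i). For (ii), the point is that in total degree $n = t$ the only possibly nonzero $E_2$-term is $E_2^{0,t} = \Hom_R(M, H^t_{\frak a}(X))$; since there are no nonzero entries in rows below $q = t$, every differential into or out of $E_2^{0,t}$ is zero on all pages (differentials $d_r$ go from $(0,t)$ to $(r, t-r+1)$, landing in a row $< t$, and arrive at $(0,t)$ from $(-r, t+r-1)$, a negative column), so $E_2^{0,t} = E_\infty^{0,t}$, and the edge map yields $H^t_{\frak a}(M,X) \cong \Hom_R(M, H^t_{\frak a}(X))$.

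For (iii), I would apply exactly the same reasoning to the special case $M = R/\frak a$: the spectral sequence gives $H^t_{\frak a}(R/\frak a, X) \cong \Hom_R(R/\frak a, H^t_{\frak a}(X))$. It then remains to identify $H^t_{\frak a}(R/\frak a, X)$ with $\Ext^t_R(R/\frak a, X)$. This follows because $\frak a$ annihilates $R/\frak a$, so $(R/\frak a)/\frak a^n(R/\frak a) = R/\frak a$ for every $n$, whence the direct limit defining generalized local cohomology collapses: $H^i_{\frak a}(R/\frak a, X) = \varinjlim_n \Ext^i_R((R/\frak a)/\frak a^n(R/\frak a), X) = \Ext^i_R(R/\frak a, X)$ for all $i$. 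Combining the two isomorphisms gives (iii). (Alternatively, since $X$ also satisfies condition (ii) of Theorem 2.2, part (iii) can be extracted directly by the same dimension-shifting argument used in the proof of that theorem, peeling off injective hulls; but routing it through the spectral sequence with $M = R/\frak a$ is cleaner and reuses the work already done for (ii).)

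The main obstacle, such as it is, lies in justifying the spectral-sequence bookkeeping in part (ii) — that is, verifying carefully that no differential on any page can connect the corner term $E_2^{0,t}$ to a nonzero group, and that the filtration on $H^t_{\frak a}(M,X)$ has only one nonzero quotient so that the edge homomorphism is genuinely an isomorphism rather than merely injective or surjective. This is where the vanishing of the rows $q < t$ is used essentially: it kills both the incoming differentials (whose sources would sit in negative columns) and the outgoing ones (whose targets sit in rows $q < t$), and it also ensures $E_\infty^{p,q} = 0$ for all $(p,q)$ with $p + q = t$ except $(0,t)$. Everything else — the vanishing in (i), the collapse of the limit in (iii) — is formal.
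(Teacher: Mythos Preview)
Your spectral-sequence argument is correct and complete; the vanishing of the rows $q<t$ does exactly what you say, and the edge-map analysis for $E_2^{0,t}$ is clean. The paper, however, proves this proposition differently: it runs an induction on $t$. The base case $t=1$ is handled via the ideal-transform sequence $0\to X\to D_{\frak a}(X)\to H^1_{\frak a}(X)\to 0$, using that $\Gamma_{\frak a}(D_{\frak a}(X))=H^1_{\frak a}(D_{\frak a}(X))=0$ together with Lemma~2.1 and Theorem~2.2; the inductive step peels off an injective hull $0\to Y\to E(Y)\to L\to 0$ to shift all three statements down by one degree. (The paper remarks at the opening of Section~2 that both routes are available and deliberately presents the hands-on induction here.) Your route is shorter and more conceptual---once the differential bookkeeping is done, (i)--(iii) fall out simultaneously---while the paper's argument avoids spectral-sequence machinery entirely and stays at the level of long exact sequences, at the cost of a separate treatment of the base case and more writing. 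Your derivation of (iii) by specializing (ii) to $M=R/\frak a$ and collapsing the constant direct system is also a genuine shortcut over the paper, which instead carries (iii) through the same induction in parallel with (i) and (ii).
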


\begin{proof}
Before beginning the proof, recall that by $D_{\frak{a}}(X)$ we mean
the ideal transform of $X$ with respect to $\fa$ which is defined by
$D_{\frak{a}}(X):=\displaystyle\lim_{\stackrel{\longrightarrow}{n}}Hom_{R}(\frak{a}^{n},X)$.
For the basic definitions and theorems about local cohomology and
ideal transform we refer the reader to [BS].

We use induction on $t$. Let $t=1$. Since $\Gamma_{\fa}(X)=0$,
$\Gamma_{\fa}(M,X)=0$ by Lemma 2.1(i). We have
$\Gamma_{\fa}(D_{\fa}(X))=H^{1}_{\fa}(D_{\fa}(X))=0$ by [BS;
Corollary 2.2.8(iv)]. Therefore
$\Gamma_{\fa}(M,D_{\fa}(X))=H^{1}_{\fa}(M,D_{\fa}(X))=0$ by Lemma
2.1(ii) and,
$Hom_{R}(R/\fa,D_{\fa}(X))=Ext^{1}_{R}(R/\fa,D_{\fa}(X))=0$ by
Theorem 2.2; so that the ideal transform sequence $0 \rightarrow X
\rightarrow D_{\fa}(X) \rightarrow H^{1}_{\fa}(X) \rightarrow 0,$
yields $Ext^{1}_{R}(R/\fa,X)\cong Hom_{R}(R/\fa,H^{1}_{\fa}(X))$ and
$H^{1}_{\fa}(M,X)\cong \Gamma_{\fa}(M,H^{1}_{\fa}(X))$. Now, the
latter isomorphisms in conjunction with [YKS; Lemma 1.1] imply the
assertion in the case where $t=1$.

Let $Y$ be an $R$-module and $t>1$. Also suppose that our claims are
satisfied  for $t-1$ and all $R$-modules $X$.

Now, assume that $H^{i}_{\fa}(Y)=0$ for all $0 \leq i < t$. Since
$\Gamma_{\frak{a}}(Y)=0$, we have $\Gamma_{\frak{a}}(E(Y))=0$ from
the fact that $E(Y)$ is an essential extension of $Y$; so that
$\Gamma_{\frak{a}}(M,E(Y))=0$ by Lemma 2.1(i). Using the exact
sequence $0 \rightarrow Y \rightarrow E(Y) \rightarrow L \rightarrow
0$ as we used in the above theorem and applying the derived functors
of $Hom_{R}(R/\frak{a},-), \ \Gamma_{\frak{a}}(M,-)$ and
$\Gamma_{\frak{a}}(-)$ to this short exact
 sequence, we obtain the isomorphisms:
$$Ext^{i-1}_{R}(R/\fa,L)\cong Ext^{i}_{R}(R/\fa,Y)\ ,\
H^{i-1}_{\fa}(M,L)\cong H^{i}_{\fa}(M,Y)\  and \
H^{i-1}_{\fa}(L)\cong H^{i}_{\fa}(Y)$$ for all $i > 0$.

Now, our assumption on $Y$ and  the latter isomorphisms show that,
$H^{i}_{\fa}(L)=0$ for all $0\leq i
 <t-1$. Hence $H^{t-1}_{\frak{a}}(M,L)\cong
 Hom_{R}(M,H^{t-1}_{\frak{a}}(L)),Ext^{t-1}_{R}(R/\frak{a},L)\cong
 Hom_{R}(R/\frak{a},H^{t-1}_{\frak{a}}(L))$ and
 $H^{i}_{\frak{a}}(M,L)=0$ for all $0 \leq i < t-1$ by the
induction hypothesis on $L$. Again from the above isomorphisms, we
have $H^{t}_{\frak{a}}(M,Y)\cong Hom_{R}(M,H^{t}_{\frak{a}}(Y)),
Ext^{t}_{R}(R/\frak{a},Y)\cong
Hom_{R}(R/\frak{a},H^{t}_{\frak{a}}(Y))$ and
$H^{i}_{\frak{a}}(M,Y)=0$ for all $0 \leq i < t,$ as we desired.
\end{proof}

The following corollary can be obtained, by straightforward
applications, from Theorem 2.2 and Proposition 2.3.

\begin{corollary} Let $M,N$ be non-zero finite
$R$-modules, $X$ be an arbitrary $R$-module, and $t$ be  a positive
integer. Then
\begin{enumerate}
  \item[(i)]{
               \begin{enumerate}
                   \item{$Ext^{1}_{R}(R/\fa,D_{\fa}(X))=H^{1}_{\fa}(M,D_{\fa}(X))=0$.}
                   \item{$Ext^{2}_{R}(R/\fa,D_{\fa}(X))\cong Hom_{R}(R/\fa,H^{2}_{\fa}(X))$ \\and
                         $H^{2}_{\fa}(M,D_{\fa}(X))\cong Hom_{R}(M,H^{2}_{\fa}(X))$.}
                   \item{$Ext^{1}_{R}(R/\fa,X/\Gamma_{\fa}(X))\cong Hom_{R}(R/\fa,H^{1}_{\fa}(X))$ \\and
                         $H^{1}_{\fa}(M,X/\Gamma_{\fa}(X))\cong Hom_{R}(M,H^{1}_{\fa}(X))$.}
               \end{enumerate} }
  \item[(ii)]{If $H^{i}_{\fa}(X)=0$ for all $0 \leq i < t$, then $H^{i}_{\fa}(M,X)=0$ for all $0 \leq i <
  t$.\\
  In this case $H^{t}_{\fa}(M,X)\neq 0$ if and only if $Supp(M) \cap Ass(H^{t}_{\fa}(X))\neq \O$}.
  \item[(iii)]{If a maximal ideal $\fm$ belongs to $Supp(M)$ and $t=grade(\fm,N)$, then
$H^{t}_{\fm}(M,N)\neq
  0$}.
  \item[(iv)]{(c.f.[S; Theorem 2.3]) If $(R,\fm)$ is a local ring and
  $t = depth(N)$, then $H^{t}_{\fm}(M,N)\neq 0$.}
\end{enumerate}
\end{corollary}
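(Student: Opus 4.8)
The plan is to derive the whole statement by substituting well-chosen modules into Proposition 2.3, and, for the non-vanishing assertions, by combining Proposition 2.3(ii) with the elementary criterion that for a finite module $M$ and an arbitrary module $N$ one has $\Hom_R(M,N)\neq 0$ if and only if $\Supp(M)\cap\Ass(N)\neq\emptyset$.

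For part (i) I would use the two auxiliary modules $D_\fa(X)$ and $X/\Gamma_\fa(X)$. The first step is to record, from the long exact sequences in ordinary local cohomology attached to $0\to\Gamma_\fa(X)\to X\to X/\Gamma_\fa(X)\to 0$ and to the ideal transform sequence $0\to X\to D_\fa(X)\to H^1_\fa(X)\to 0$ (using that $\Gamma_\fa(X)$ and $H^1_\fa(X)$ are $\fa$-torsion, hence $\Gamma_\fa$-acyclic; cf.\ [BS; Corollary 2.2.8]), that $\Gamma_\fa(D_\fa(X))=H^1_\fa(D_\fa(X))=0$ and $H^i_\fa(D_\fa(X))\cong H^i_\fa(X)$ for $i\geq 2$, while $\Gamma_\fa(X/\Gamma_\fa(X))=0$ and $H^i_\fa(X/\Gamma_\fa(X))\cong H^i_\fa(X)$ for $i\geq 1$. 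By Theorem 2.2 this means that $D_\fa(X)$ satisfies the equivalent conditions there for $t=2$ and $X/\Gamma_\fa(X)$ for $t=1$. Then (i)(a) is obtained by applying Proposition 2.3(i) to $D_\fa(X)$ with $t=2$ (giving $H^1_\fa(M,D_\fa(X))=0$) together with the implication (i)$\Rightarrow$(ii) of Theorem 2.2 for $D_\fa(X)$ (giving $\Ext^1_R(R/\fa,D_\fa(X))=0$); (i)(b) is obtained by applying Proposition 2.3(ii),(iii) to $D_\fa(X)$ with $t=2$ and rewriting $H^2_\fa(D_\fa(X))\cong H^2_\fa(X)$; and (i)(c) is obtained by applying Proposition 2.3(ii),(iii) to $X/\Gamma_\fa(X)$ with $t=1$ and rewriting $H^1_\fa(X/\Gamma_\fa(X))\cong H^1_\fa(X)$.

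For part (ii), the vanishing assertion is exactly Proposition 2.3(i), and Proposition 2.3(ii) supplies $H^t_\fa(M,X)\cong\Hom_R(M,H^t_\fa(X))$, so the non-vanishing part reduces to the $\Hom$ criterion stated above (which is [YKS; Lemma 1.1]). I would verify that criterion by noting, for the ``only if'' direction, that a nonzero homomorphism $M\to N$ has nonzero finite image $I$, so that $\emptyset\neq\Ass(I)\subseteq\Ass(N)\cap\Supp(M)$; and, for the ``if'' direction, by picking $\fp\in\Supp(M)\cap\Ass(N)$, localizing at $\fp$, and composing a Nakayama surjection $M_\fp\twoheadrightarrow\kappa(\fp)$ with an inclusion $\kappa(\fp)\hookrightarrow N_\fp$. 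Parts (iii) and (iv) are specializations of (ii): for (iii) take $\fa=\fm$ and $X=N$, so that $t=\grade(\fm,N)=\min\{i:H^i_\fm(N)\neq 0\}$ puts $N$ in the situation of (ii), while $H^t_\fm(N)$ is a nonzero $\fm$-torsion module and hence $\Ass(H^t_\fm(N))=\{\fm\}$, which meets $\Supp(M)$ by hypothesis; for (iv) apply (iii) to the maximal ideal of $R$, using $\depth(N)=\grade(\fm,N)$ and the fact that $\fm\in\Supp(M)$ since $M$ is a nonzero finite module over a local ring.

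I do not expect a real obstacle: once Theorem 2.2 and Proposition 2.3 are available the corollary is essentially bookkeeping. The two places deserving attention are keeping straight which auxiliary module ($D_\fa(X)$ or $X/\Gamma_\fa(X)$) verifies the Theorem 2.2 hypothesis for which $t$, and the accompanying degree shift back to $H^\bullet_\fa(X)$ in part (i); and, in parts (ii)--(iv), converting the isomorphism of Proposition 2.3(ii) into an honest non-vanishing statement, for which the $\Hom$--$\Supp$--$\Ass$ criterion and the remark that a nonzero $\fm$-torsion module has associated primes $\{\fm\}$ are precisely what is needed.
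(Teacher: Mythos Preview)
Your proposal is correct and follows exactly the route the paper indicates: the paper gives no detailed proof, merely stating that the corollary ``can be obtained, by straightforward applications, from Theorem~2.2 and Proposition~2.3,'' and your argument does precisely this, invoking [BS; Corollary~2.2.8] for the vanishing properties of $D_\fa(X)$ and $X/\Gamma_\fa(X)$ and [YKS; Lemma~1.1] for the $\Hom$--$\Supp$--$\Ass$ criterion, just as in the paper's proof of Proposition~2.3. One cosmetic point: the ideal transform sequence you write as $0\to X\to D_\fa(X)\to H^1_\fa(X)\to 0$ is exact only when $\Gamma_\fa(X)=0$ (in general the first term is $X/\Gamma_\fa(X)$), but this does not affect your conclusions since you correctly extract the needed facts about $H^i_\fa(D_\fa(X))$ from [BS; Corollary~2.2.8].
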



\begin{lemma}
Suppose that  $P$ is a finite projective $R$-module and $X$ is an
arbitrary $R$-module. Then $H^{i}_{\fa}(P,X)\cong
Hom_{R}(P,H^{i}_{\fa}(X))$ for all $i\geq 0$.
\end{lemma}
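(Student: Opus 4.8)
The plan is to reduce to the case where $P$ is free, and then to the case $P = R$, using that generalized local cohomology is built from $\operatorname{Ext}$ together with the fact that $\operatorname{Hom}_R(P, -)$ is exact on a finite projective $P$.

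First I would record the base case $P = R$: since $R/\fa^n R \cong R/\fa^n$, one has $H^i_\fa(R, X) = \varinjlim_n \operatorname{Ext}^i_R(R/\fa^n, X) = H^i_\fa(X)$, which is exactly $\operatorname{Hom}_R(R, H^i_\fa(X))$, so the claim holds. More generally, for a finite free module $R^m$, additivity of $\operatorname{Ext}^i_R(-, X)$ in the first variable (and commutation of finite direct sums with the direct limit) gives $H^i_\fa(R^m, X) \cong \bigoplus_{j=1}^m H^i_\fa(X) \cong \operatorname{Hom}_R(R^m, H^i_\fa(X))$, with the isomorphism being the natural one. So the statement is settled for finite free modules and for all $i$.

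Next, for a general finite projective $P$, choose a finite free $R$-module $F$ and an $R$-module $Q$ with $F \cong P \oplus Q$; note $Q$ is again finite projective. The plan is to exploit that every functor involved is additive: $H^i_\fa(-, X)$ is additive in the first argument (again because $\operatorname{Ext}^i_R(-, X)$ is, and $\varinjlim$ is exact), so $H^i_\fa(F, X) \cong H^i_\fa(P, X) \oplus H^i_\fa(Q, X)$; likewise $\operatorname{Hom}_R(F, H^i_\fa(X)) \cong \operatorname{Hom}_R(P, H^i_\fa(X)) \oplus \operatorname{Hom}_R(Q, H^i_\fa(X))$. There is a natural transformation $\eta_P \colon H^i_\fa(P, X) \to \operatorname{Hom}_R(P, H^i_\fa(X))$ — for instance the edge map of the spectral sequence $(*)$, or the map induced by $\operatorname{Ext}^i_R(P/\fa^n P, X) \to \operatorname{Ext}^i_R(P, X) = \operatorname{Hom}_R(P, \operatorname{Ext}^i_R(R, X))$ combined with the direct limit — and naturality makes it compatible with the direct sum decompositions coming from $F \cong P \oplus Q$. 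Since $\eta_F$ is an isomorphism by the free case, a direct summand of an isomorphism is an isomorphism, so $\eta_P$ is an isomorphism as well.

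The only real point requiring care is pinning down a single \emph{natural} transformation $H^i_\fa(P, -) \Rightarrow \operatorname{Hom}_R(P, H^i_\fa(-))$ so that the splitting argument genuinely applies; the spectral sequence $(*)$ supplies this (its edge homomorphism in degree $q = 0$), and for a finite projective $P$ the higher $\operatorname{Ext}^p_R(P, -)$ vanish, which is precisely what forces the edge map to be an isomorphism in the free case and, by the summand argument, in general. Everything else is a routine unwinding of additivity and does not need to be spelled out.
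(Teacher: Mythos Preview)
Your argument is correct, and at bottom it rests on the same fact the paper uses: the spectral sequence $E_2^{p,q}=\Ext^p_R(P,H^q_\fa(X))\Rightarrow H^{p+q}_\fa(P,X)$ collapses because $\Ext^{>0}_R(P,-)=0$ for a projective $P$. The paper simply says this in one line. Your route---first treating $P=R$, then finite free $P$, then passing to a direct summand via a natural transformation---is a valid but unnecessary detour: once you invoke the edge map of $(*)$ to get a natural $\eta_P$, the vanishing of higher $\Ext$ for \emph{any} finite projective $P$ already forces $\eta_P$ to be an isomorphism directly, with no need to factor through the free case. You essentially acknowledge this yourself in your final paragraph.

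One small correction: your parenthetical alternative description of the natural map, ``$\Ext^i_R(P/\fa^n P,X)\to \Ext^i_R(P,X)=\Hom_R(P,\Ext^i_R(R,X))$'', does not make sense for $i>0$ since $\Ext^i_R(R,X)=0$; stick with the spectral-sequence edge map (or, if you want a spectral-sequence-free construction, use that $\Hom_R(P,-)$ is exact and commutes with $\Gamma_\fa$ for finitely presented $P$, so $R^i(\Gamma_\fa\circ\Hom_R(P,-))\cong \Hom_R(P,H^i_\fa(-))$).
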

\begin{proof}
This lemma can be proved  by using the method implied in Proposition
2.3. It is also an immediate consequence of the spectral sequence
$$E^{i,j}_{2}:=Ext^{i}_{R}(P,H^{j}_{\frak{a}}(X))
_{\stackrel{\Longrightarrow}{i}} H^{i+j}_{\frak{a}}(P,X)$$ where in
the corresponding double complex $(E^{i,j}_{2})_{i\geq 0 ,j\geq 0}$
all of the columns except the first one vanish.
\end{proof}

In the course of the remaining parts of the paper for  an arbitrary
$R$-module $X$, by $cd(\fa,X)$ (cohomological dimension of $X$ at
$\fa$) we mean the largest integer $i$ in which $H^{i}_{\fa}(X)$ is
non-zero. Also $pd(X)$ and $dim(X)$ are denoted as the  projective
dimension and the Krull dimension of $X$, respectively. The next
proposition shows how Artinian property behaves similarly at the
initial points of $Ext$ and at those of ordinary local cohomology
modules. This proposition gives a new proof for [Mel; Theorem 5.5].

\begin{proposition}
Let  $X$ be  an arbitrary $R$-module  and  $t$ be a positive
integer. Then the following statements are equivalent:
       \begin{enumerate}
          \item[(i)]{$H^{i}_{\fa}(X)$ is Artinian  for all $0 \leq i <
  t.$}
          \item[(ii)]{$Ext^{i}_{R}(R/\fa,X)$ is Artinian  for all $0 \leq i < t.$ }
       \end{enumerate}
\end{proposition}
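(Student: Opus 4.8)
The plan is to mimic exactly the inductive dévissage used in the proof of Theorem 2.2, replacing the property ``being zero'' by the property ``being Artinian,'' and exploiting that the class of Artinian modules is a Serre subcategory closed under submodules, quotients and extensions.

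First I would handle the base case $t=1$. Using the isomorphism $\Hom_R(R/\fa,X)\cong\Hom_R(R/\fa,\Gamma_{\fa}(X))$, the module $\Hom_R(R/\fa,X)$ is a submodule of $\Gamma_{\fa}(X)$; hence if $\Gamma_{\fa}(X)$ is Artinian so is $\Hom_R(R/\fa,X)$. Conversely, if $\Hom_R(R/\fa,X)=\Hom_R(R/\fa,\Gamma_{\fa}(X))$ is Artinian, then since $\Gamma_{\fa}(X)$ is $\fa$-torsion and Melkersson's criterion says an $\fa$-torsion module $T$ is Artinian as soon as $\Hom_R(R/\fa,T) = (0:_T\fa)$ is Artinian (equivalently, of finite length when $\fa$ is $\fm$-primary; in general one uses [Melkersson] that an $\fa$-torsion $R$-module $T$ is Artinian iff $(0:_T\fa)$ is Artinian), we conclude $\Gamma_{\fa}(X)$ is Artinian. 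This settles $t=1$.

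For the inductive step, fix $t>1$, assume the equivalence for $t-1$ over all $R$-modules, and let $Y$ be given. Embed $Y$ into its injective hull, $0\to Y\to E(Y)\to L\to 0$. As in Theorem 2.2, applying the derived functors of $\Hom_R(R/\fa,-)$ and $\Gamma_{\fa}(-)$ gives, once we know $\Gamma_{\fa}(Y)$ is Artinian (which holds in either direction, by the $t=1$ case applied to $Y$ when we know $\Hom_R(R/\fa,Y)$ is Artinian, or trivially when $\Gamma_{\fa}(Y)=0$ is Artinian), the isomorphisms
$$\Ext^{i-1}_R(R/\fa,L)\cong\Ext^{i}_R(R/\fa,Y)\quad\text{and}\quad H^{i-1}_{\fa}(L)\cong H^{i}_{\fa}(Y)\qquad(i>0).$$
Here one must be slightly careful: when $\Gamma_{\fa}(Y)$ is merely Artinian rather than zero, the four-term sequence $0\to\Gamma_{\fa}(Y)\to\Gamma_{\fa}(E(Y))\to\Gamma_{\fa}(L)\to H^1_{\fa}(Y)\to 0$ together with $\Gamma_{\fa}(E(Y))=0$ (essential extension) shows $\Gamma_{\fa}(L)\cong H^1_{\fa}(Y)$ modulo an Artinian submodule; combined with the analogous $\Hom/\Ext$ sequence one propagates the Artinian property up and down the isomorphisms using that Artinian modules form a Serre subcategory. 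Granting this, the statement (i)$\Leftrightarrow$(ii) for $Y$ at level $t$ reduces to (i)$\Leftrightarrow$(ii) for $L$ at level $t-1$, which is the induction hypothesis.

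The main obstacle, and the only place the argument is more than a transcription of Theorem 2.2, is this bookkeeping with the connecting maps when $\Gamma_{\fa}(Y)$ is Artinian but nonzero: the isomorphisms $\Ext^{i-1}_R(R/\fa,L)\cong\Ext^{i}_R(R/\fa,Y)$ for $i>1$ are still clean (they come from $\Ext^j_R(R/\fa,E(Y))=0$ for $j\geq 1$, since $E(Y)$ is $\Hom_R(R/\fa,-)$-acyclic — indeed $E(Y)$ is a direct sum of indecomposable injectives and for an $\fa$-torsion-free summand $\Hom_R(R/\fa,-)$ and its derived functors vanish, while the $\fa$-torsion part is an Artinian-module issue handled above), but the transition at $i=1$ involves cokernels that are Artinian, not zero. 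One pushes these finitely many ``error terms'' through using closure of the Serre subcategory under sub, quotient, and extension, exactly as in the proof of Proposition 3.4 later in the paper. Everything else — the reduction via $E(Y)$, the use of Lemma 2.1-type acyclicity, and the induction — is structurally identical to Theorem 2.2, and I would phrase the writeup by saying ``arguing as in the proof of Theorem 2.2, and using that the Artinian $R$-modules form a Serre subcategory'' and then only spelling out the base case and the handling of the Artinian error terms.
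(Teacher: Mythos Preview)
Your base case is fine and matches the paper. The inductive step, however, has a genuine error: you assert that $\Gamma_{\fa}(E(Y))=0$ ``by essential extension,'' but this is true only when $\Gamma_{\fa}(Y)=0$. In general $\Gamma_{\fa}(E(Y))$ is the injective hull of $\Gamma_{\fa}(Y)$, which is nonzero as soon as $\Gamma_{\fa}(Y)\neq 0$; under hypothesis~(i) one only knows $\Gamma_{\fa}(Y)$ is Artinian, not zero. Your four-term sequence then does not give $\Gamma_{\fa}(L)\cong H^{1}_{\fa}(Y)$, and the ``error-term'' bookkeeping you sketch is not actually carried out. (It \emph{can} be salvaged: one observes that $\Gamma_{\fa}(E(Y))=E(\Gamma_{\fa}(Y))$ is Artinian whenever $\Gamma_{\fa}(Y)$ is, and then pushes the Artinian pieces through both four-term sequences using the Serre property; but you have not done this, and the parenthetical about $\Hom_R(R/\fa,-)$-acyclicity of $E(Y)$ does not address the degree-zero terms where the problem sits.)

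The paper avoids this snag entirely by a small but decisive change in the d\'evissage: instead of embedding $Y$ into $E(Y)$, it first sets $N=Y/\Gamma_{\fa}(Y)$ and embeds $N$ into $E(N)$. Since $\Gamma_{\fa}(N)=0$, one genuinely has $\Gamma_{\fa}(E(N))=0$ and $\Hom_R(R/\fa,E(N))=0$, so the isomorphisms
\[
\Ext^{i-1}_R(R/\fa,L)\cong \Ext^{i}_R(R/\fa,N)\quad\text{and}\quad H^{i-1}_{\fa}(L)\cong H^{i}_{\fa}(N)
\]
hold cleanly for all $i>0$, exactly as in Theorem~2.2, and the induction on $t$ applies to $L$. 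The passage between $Y$ and $N$ is then handled in one stroke via the short exact sequence $0\to\Gamma_{\fa}(Y)\to Y\to N\to 0$, using only that $\Gamma_{\fa}(Y)$ is Artinian and that Artinian modules form a Serre subcategory. This is both simpler and correct; adopt it in your write-up.
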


\begin{proof}

 We prove by induction on $t$. Considering
$0:_{X}\fa=0:_{\Gamma_{\fa}(X)}\fa$ and applying Melkersson's
Theorem [BS; Theorem 7.1.2], the case where $t=1$ is
straightforward.

Let $Y$ be an $R$-module and $t>1$. Also assume that we have proved
the claim for $t-1$ and all $R$-modules $X$. Let
$N=Y/\Gamma_{\frak{a}}(Y)$ and $L=E(N)/N$ where $E(N)$ is an
injective hull of $N$. Since $\Gamma_{\frak{a}}(N)=0,$ we have
$\Gamma_{\frak{a}}(E(N))=0$ from the fact that $E(N)$ is an
essential extension of $N$. Now consider the exact sequence
$$0 \rightarrow N \rightarrow E(N) \rightarrow L \rightarrow 0.$$
 Applying the derived functors of $Hom_{R}(R/\frak{a},-)$ and $\Gamma_{\frak{a}}(-)$ to the above short exact
 sequence, we have isomorphisms:
$$Ext^{i-1}_{R}(R/\fa,L)\cong Ext^{i}_{R}(R/\fa,N) \quad and \quad H^{i-1}_{\fa}(L)\cong H^{i}_{\fa}(N)$$
for all $i>0.$

Now, assume that $H^{i}_{\fa}(Y)$ is Artinian  for all $0 \leq i <
t.$ From the latter isomorphisms and [BS; Corollary 2.1.7], we have
$H^{i}_{\fa}(L)$ is Artinian  for all $0 \leq i < t-1.$ Hence
$Ext^{i}_{R}(R/\fa,L)$ is Artinian  for all $0 \leq i < t-1$ by the
induction hypothesis on $L$. Again by the above isomorphisms,
$Ext^{i}_{R}(R/\fa,N)$ is Artinian  for all $0 \leq i < t.$ Now, the
exact sequence $0 \rightarrow \Gamma_{\frak{a}}(Y) \rightarrow Y
\rightarrow N \rightarrow 0$ in conjunction with Artinianness of
$\Gamma_{\frak{a}}(Y)$ show that, $Ext^{i}_{R}(R/\fa,Y)$ is Artinian
for all $0 \leq i < t.$

Conversely, as the above proof, we see that if
$Ext^{i}_{R}(R/\fa,Y)$ is Artinian  for all $0 \leq i < t$, then
$H^{i}_{\fa}(Y)$ is Artinian for all $0 \leq i < t.$ We leave the
proof to the reader. The proof is completed.
\end{proof}

\begin{proposition} Let $M$ be a finite $R$-module with $pd(M)<\infty$, $X$ be an arbitrary
$R$-module,  and  $t$ be a positive integer. If $R$-module $X$
satisfies the equivalent conditions in Proposition 2.6 for the
integer $t$, then $H^{i}_{\fa}(M,X)$ is Artinian  for all $\ 0 \leq
i < t.$
\end{proposition}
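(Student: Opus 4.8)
The plan is to combine the ``vanishing $\Rightarrow$ Artinian'' pattern already exhibited in Propositions 2.3 and 2.6 with a projective resolution argument. First I would reduce to the case where $X$ has no $\fa$-torsion: since $\Gamma_{\fa}(X)$ is Artinian by the $t=1$ case of Proposition 2.6, and since $H^{i}_{\fa}(M,-)$ takes the short exact sequence $0\to\Gamma_{\fa}(X)\to X\to X/\Gamma_{\fa}(X)\to 0$ to a long exact sequence in which the outer terms $H^{i}_{\fa}(M,\Gamma_{\fa}(X))$ are Artinian (this follows from Lemma 2.7 applied to a finite projective resolution of $M$, or simply because $\Gamma_{\fa}(X)$ is $\fa$-torsion and Artinian), it suffices to prove the statement for $X/\Gamma_{\fa}(X)$. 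So assume $\Gamma_{\fa}(X)=0$.

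Now take a finite projective resolution $P_{\bullet}\to M$ of length $n=pd(M)$ and consider the first-quadrant double complex obtained by applying $\Gamma_{\fa}(-)$ to an injective resolution of $X$ and then $\Hom_{R}(P_{\bullet},-)$; this gives the spectral sequence $(*)$, namely $E^{p,q}_{2}=\Ext^{p}_{R}(M,H^{q}_{\fa}(X))\Rightarrow H^{p+q}_{\fa}(M,X)$. The key input is that $X$ satisfies the equivalent conditions of Proposition 2.6 for $t$, so $H^{q}_{\fa}(X)$ is Artinian for $0\le q<t$. Since $M$ is finite, $\Ext^{p}_{R}(M,-)$ applied to an Artinian module is Artinian (Artinian modules form a Serre subcategory closed under the relevant operations; concretely, $\Ext^{p}_{R}(M,A)$ is a subquotient of a finite direct sum of copies of $A$ when $M$ has a finite free, hence finite projective, resolution). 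Therefore $E^{p,q}_{2}$ is Artinian for all $p\ge 0$ and all $0\le q<t$. For a fixed total degree $i<t$, every entry $E^{p,q}_{\infty}$ on the line $p+q=i$ has $q\le i<t$, hence is a subquotient of the Artinian module $E^{p,q}_{2}$, hence is Artinian; as there are only finitely many such entries ($0\le p\le n$) and $H^{i}_{\fa}(M,X)$ admits a finite filtration with these as quotients, $H^{i}_{\fa}(M,X)$ is Artinian for all $0\le i<t$.

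Alternatively, and perhaps more in the spirit of the preceding proofs, one can run the induction on $t$ directly: using the short exact sequence $0\to N\to E(N)\to L\to 0$ with $N=X/\Gamma_{\fa}(X)$ as in Proposition 2.6, the isomorphisms $H^{i-1}_{\fa}(M,L)\cong H^{i}_{\fa}(M,N)$ and $H^{i-1}_{\fa}(L)\cong H^{i}_{\fa}(N)$ for $i>0$ (which hold because $\Gamma_{\fa}(M,E(N))=0$ by Lemma 2.1(i), as $E(N)$ is an essential extension of the $\fa$-torsion-free $N$, hence itself $\fa$-torsion-free) reduce the Artinianness of $H^{i}_{\fa}(M,N)$ for $i<t$ to that of $H^{i}_{\fa}(M,L)$ for $i<t-1$, which the induction hypothesis supplies once we check that $L$ satisfies the conditions of Proposition 2.6 for $t-1$ — and this is exactly the content of the isomorphisms $H^{i-1}_{\fa}(L)\cong H^{i}_{\fa}(N)$ together with Artinianness of $\Gamma_{\fa}(Y)$. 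I expect the main obstacle to be the bookkeeping needed to transfer the Proposition 2.6 hypotheses from $X$ to $L$ and to handle the $\fa$-torsion submodule cleanly; the spectral sequence route avoids the induction but requires the (routine) fact that $\Ext^{p}_{R}(M,-)$ preserves Artinianness when $pd(M)<\infty$.
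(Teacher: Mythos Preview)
Your two arguments are both correct, but neither is the route the paper takes. The paper proves Proposition~2.7 by induction on $pd(M)$: for $pd(M)=0$ one has $M\oplus M'\cong R^{n}$, so Lemma~2.5 gives $H^{i}_{\fa}(M,X)\oplus H^{i}_{\fa}(M',X)\cong H^{i}_{\fa}(X)^{n}$, which is Artinian by hypothesis; for $pd(M)>0$ one chooses $0\to T\to F\to M\to 0$ with $F$ free and $pd(T)<pd(M)$, and the long exact sequence in $H^{\bullet}_{\fa}(-,X)$ sandwiches $H^{i}_{\fa}(M,X)$ between two modules already known to be Artinian.

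Compared with this, your spectral-sequence argument is more conceptual and in fact proves more: once you observe that $\Ext^{p}_{R}(M,A)$ is Artinian for any finite $M$ and Artinian $A$ (because it is a subquotient of $\Hom_{R}(F_{p},A)\cong A^{r_{p}}$ for a finite free resolution $F_{\bullet}$), the finiteness of the filtration on $H^{i}_{\fa}(M,X)$ comes from the first-quadrant shape of the spectral sequence, not from $pd(M)<\infty$; so the hypothesis $pd(M)<\infty$ is actually superfluous in your proof. Your preliminary reduction to $\Gamma_{\fa}(X)=0$ is likewise unnecessary for the spectral-sequence route (it is needed only for your alternative induction on $t$). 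The paper's induction on $pd(M)$, by contrast, genuinely uses that the resolution terminates, and is the only place in that argument where the hypothesis enters.

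Two small corrections: there is no ``Lemma 2.7'' in the paper---you presumably mean either Lemma~2.5 or the fact (cited as [YKS; Lemma~1.1]) that $H^{i}_{\fa}(M,A)\cong \Ext^{i}_{R}(M,A)$ for $\fa$-torsion $A$; and in your induction-on-$t$ sketch you should also note that $H^{i}_{\fa}(M,E(N))=0$ for $i\geq 1$ (since $E(N)$ is injective), not only that $\Gamma_{\fa}(M,E(N))=0$, to obtain the isomorphisms $H^{i-1}_{\fa}(M,L)\cong H^{i}_{\fa}(M,N)$ for all $i\geq 1$.
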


\begin{proof}
 We use induction  on $pd(M).$ The case where $pd(M)=0$ yields
$M\oplus M'\cong R^{n}$ for some $R-$module $M'$ and some integer
$n$. Then, by Lemma 2.5,
$$H^{i}_{\fa}(M,X)\oplus H^{i}_{\fa}(M',X)\cong Hom_{R}(M,H^{i}_{\fa}(X))\oplus Hom_{R}(M',H^{i}_{\fa}(X))$$
$$ \cong Hom_{R}(R^{n},H^{i}_{\fa}(X))\cong H^{i}_{\fa}(X)^{n}$$
which gives the assertion in this case.

Suppose that $pd(M)>0$ and that the assertion is true for every
finite $R$-module $T$ with $pd(T)<pd(M)$. Taking the exact sequence
$0 \rightarrow T \rightarrow F \rightarrow M \rightarrow 0,$ where
$F$ is  free and $T$ is an $R$-module, we set the long exact
sequence
$$\cdots \rightarrow H^{i-1}_{\fa}(T,X) \rightarrow H^{i}_{\fa}(M,X) \rightarrow H^{i}_{\fa}(F,X) \rightarrow \cdots$$
where by induction hypothesis $H^{i-1}_{\fa}(T,X)$ and
$H^{i}_{\fa}(F,X)$ are Artinian; so that $H^{i}_{\fa}(M,X)$ is
Artinian as desired.
\end{proof}

The below proposition can also be proved either by considering the
spectral sequence $(*)$ or using the method in Proposition 2.3 based
on an induction on $cd(\fa,X).$


\begin{proposition}
(compare [Y; Theorem 2.5]) Let $M$ be a finite $R$-module  with
$pd(M)<\infty$ and $X$ be an arbitrary $R$-module. Then for all
$i>pd(M)+cd(\fa,X)$, $H^{i}_{\fa}(M,X)=0$ and
$$H^{pd(M)+cd(\fa,X)}_{\fa}(M,X)\cong
Ext^{pd(M)}_{R}(M,H^{cd(\fa,X)}_{\fa}(X)).$$
\end{proposition}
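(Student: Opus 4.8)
The plan is to use the change-of-rings/spectral sequence $(*)$, namely $E^{p,q}_2 := \Ext^p_R(M, H^q_{\fa}(X)) \Rightarrow H^{p+q}_{\fa}(M,X)$, together with the two finiteness hypotheses: $\Ext^p_R(M,-)$ vanishes for $p > pd(M)$ (so the double complex has no columns past $p = pd(M)$), and $H^q_{\fa}(X) = 0$ for $q > cd(\fa,X)$ (so no rows past $q = cd(\fa,X)$). First I would record that the $E_2$-page lives in the box $0 \le p \le pd(M)$, $0 \le q \le cd(\fa,X)$, hence $E^{p,q}_\infty = 0$ whenever $p+q > pd(M) + cd(\fa,X)$, and since $H^n_{\fa}(M,X)$ has a finite filtration with quotients the $E^{p,q}_\infty$ with $p+q = n$, this immediately gives $H^i_{\fa}(M,X) = 0$ for all $i > pd(M) + cd(\fa,X)$.

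For the top nonvanishing degree $n_0 := pd(M) + cd(\fa,X)$, the only $(p,q)$ with $p+q = n_0$ lying inside the box is the corner $(p,q) = (pd(M), cd(\fa,X))$, so $H^{n_0}_{\fa}(M,X) \cong E^{pd(M), cd(\fa,X)}_\infty$. It then remains to check that this corner term already stabilizes at $E_2$, i.e. that no nonzero differential enters or leaves it on any page $E_r$ with $r \ge 2$. The outgoing differential $d_r : E^{p,q}_r \to E^{p+r, q-r+1}_r$ has target with first index $p + r = pd(M) + r > pd(M)$, hence zero; the incoming differential $d_r : E^{p-r, q+r-1}_r \to E^{p,q}_r$ has source with second index $q + r - 1 = cd(\fa,X) + r - 1 > cd(\fa,X)$, hence zero. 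Therefore $E^{pd(M),cd(\fa,X)}_\infty = E^{pd(M),cd(\fa,X)}_2 = \Ext^{pd(M)}_R(M, H^{cd(\fa,X)}_{\fa}(X))$, which yields the claimed isomorphism.

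Alternatively, as the paper suggests, one can avoid spectral sequences and argue by induction on $c := cd(\fa,X)$ following the pattern of Proposition 2.3: for the base case $c = 0$ one has $H^i_{\fa}(X) = 0$ for $i > 0$, so $X = \Gamma_{\fa}(X)$ and Lemma 2.1 type reasoning together with $\Ext^i_R(M,-) = 0$ for $i > pd(M)$ gives $H^i_{\fa}(M,X) \cong \Ext^i_R(M,X) \cong \Ext^i_R(M,\Gamma_{\fa}(X))$ with the vanishing and the top isomorphism following from the projective resolution of $M$. For the inductive step one picks an injective hull, forms $0 \to X \to E(X) \to L \to 0$; since $H^i_{\fa}(E(X)) = 0$ for $i > 0$ and $\Gamma_{\fa}(E(X))$ is injective, the connecting maps give $H^{i-1}_{\fa}(L) \cong H^i_{\fa}(X)$ for $i \ge 2$ and similarly (using Lemma 2.5 for the projective/injective pieces and the five lemma) one propagates the vanishing and the top isomorphism from $L$, which has $cd(\fa,L) = c - 1$, down to $X$.

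The main obstacle in the spectral-sequence approach is essentially bookkeeping rather than a genuine difficulty: one must be careful that the double complex really is concentrated in the finite rectangle — this needs that $\Ext^p_R(M, -)$ genuinely vanishes above $pd(M)$ (true since $M$ is finite with finite projective dimension, so it has a finite projective resolution) and that $H^q_{\fa}(X)$ vanishes above $cd(\fa,X)$ (true by definition) — and then to verify the corner stabilizes, which is the short differential-degree count above. In the inductive approach the delicate point is instead handling the module pieces that are injective or projective: one uses Lemma 2.5 to compute $H^i_{\fa}(P,X) \cong \Hom_R(P, H^i_{\fa}(X))$ for $P$ projective, and the fact that $\Gamma_{\fa}$ of an injective is again injective (so that generalized local cohomology of $M$ against it reduces to $\Ext$ against an injective, which vanishes in positive degrees), to make the connecting isomorphisms go through; assembling these with the five lemma to get the top isomorphism is the only step requiring care.
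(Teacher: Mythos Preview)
Your spectral-sequence argument is correct and is precisely one of the two routes the paper itself indicates (the paper gives no detailed proof, only the remark that one may use the spectral sequence $(*)$ or induct on $cd(\fa,X)$ as in Proposition~2.3). The box argument and the corner-stability check are exactly the right way to read off both the vanishing and the top isomorphism.

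One small slip in your alternative inductive sketch: in the base case $cd(\fa,X)=0$ it is \emph{not} true that $X=\Gamma_{\fa}(X)$, nor that $H^{i}_{\fa}(M,X)\cong \Ext^{i}_{R}(M,X)$; vanishing of the higher $H^{i}_{\fa}(X)$ does not force $X$ to be $\fa$-torsion (any injective $X$ with $\Gamma_{\fa}(X)\neq 0$ is a counterexample). What is true, and suffices, is that the spectral sequence (or, equivalently, an induction on $pd(M)$ using a free presentation and Lemma~2.5) gives $H^{i}_{\fa}(M,X)\cong \Ext^{i}_{R}(M,\Gamma_{\fa}(X))$ directly, from which the base case follows. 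With that correction the inductive outline is also in line with the paper's second suggested approach.
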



\section{Cofiniteness results}
In this section we deal with the cofiniteness property of
generalized local cohomology modules. Recall that an $R$-module $X$
is said to be $\fa$-cofinite if $Supp(X)\subseteq V(\fa)$ and either
$Ext^{i}_{R}(R/\fa,X)$ is finite for all $i$ or equally,
$Tor^{R}_{i}(R/\fa,X)$ is finite for all $i$ [Mel; Theorem 2.1].


\begin{proposition}
(compare [M; Theorem 2.9)]) Assume that $M,N$ are finite $R$-modules
such that $t=pd(M)$ and $d=dim(N)$ are finite. Then
$H^{t+d}_{\fa}(M,N)$ is an $\fa$-cofinite Artinian module.

\begin{proof}
If $cd(\fa,N)<d,$ $H^{t+d}_{\fa}(M,N)=0$ by Proposition 2.8. Hence
suppose that $cd(\fa,N)=d$ then we have $H^{t+d}_{\fa}(M,N)\cong
Ext^{t}_{R}(M,H^{d}_{\fa}(N)),$ again by Proposition 2.8. Now,
according to [Mel; Proposition 5.1] $H^{d}_{\fa}(N)$ is an
$\fa$-cofinite Artinian module and also by [Mel; Corollary 4.4] the
category of $\fa$-cofinite Artinian modules is a Serre subcategory
of the category of $R$-modules, i.e. this subcategory is closed
under taking subobjects, quotients and extensions. Therefore
$Ext^{t}_{R}(M,H^{d}_{\fa}(N))$ and hence $H^{t+d}_{\fa}(M,N)$ is an
$\fa$-cofinite Artinian module.
\end{proof}
\end{proposition}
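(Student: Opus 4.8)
The plan is to reduce the statement to a property of a single $Ext$-module, by exploiting the behaviour of generalized local cohomology at the top cohomological degree, and then to invoke known structural results of Melkersson. First I would recall that Grothendieck's vanishing theorem gives $cd(\fa,N)\leq dim(N)=d$, so exactly one of the alternatives $cd(\fa,N)<d$ and $cd(\fa,N)=d$ holds. In the case $cd(\fa,N)<d$ we have $t+d>pd(M)+cd(\fa,N)$, so the first assertion of Proposition 2.8 gives $H^{t+d}_{\fa}(M,N)=0$; the zero module is vacuously $\fa$-cofinite and Artinian, and there is nothing further to prove here.

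The substantive case is $cd(\fa,N)=d$. Then the second assertion of Proposition 2.8 furnishes an isomorphism $H^{t+d}_{\fa}(M,N)\cong Ext^{t}_{R}(M,H^{d}_{\fa}(N))$, and it suffices to show the right-hand side is an $\fa$-cofinite Artinian module. At this point I would use two external facts. First, $H^{d}_{\fa}(N)$ is itself an $\fa$-cofinite Artinian $R$-module: its Artinianness is classical, and its $\fa$-cofiniteness at the top degree is [Mel; Proposition 5.1]. Second, the class of $\fa$-cofinite Artinian $R$-modules is a Serre subcategory of the category of $R$-modules, i.e. it is closed under submodules, quotient modules and extensions [Mel; Corollary 4.4]; closure under extensions in particular forces closure under finite direct sums.

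To finish I would exploit the hypothesis $pd(M)=t<\infty$: take a free resolution of $M$ of length $t$ whose terms are finitely generated free $R$-modules. Computing $Ext^{t}_{R}(M,H^{d}_{\fa}(N))$ from this resolution exhibits it as a quotient of $Hom_{R}(F_{t},H^{d}_{\fa}(N))\cong H^{d}_{\fa}(N)^{r}$ for some $r\geq 0$. Since the Serre subcategory above contains $H^{d}_{\fa}(N)^{r}$ and is closed under quotients, it contains $Ext^{t}_{R}(M,H^{d}_{\fa}(N))$, hence $H^{t+d}_{\fa}(M,N)$ is $\fa$-cofinite and Artinian, as claimed.

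I do not anticipate a genuine obstacle: the weight of the argument is carried by Proposition 2.8 together with the cited theorems of Melkersson, and what remains is routine Serre-subcategory bookkeeping. The one point worth care is tracking $\fa$-cofiniteness rather than mere Artinianness at every step — Artinianness of $H^{t+d}_{\fa}(M,N)$ is already due to Mafi [M; Theorem 2.9], so the added content is precisely the $\fa$-cofiniteness, and this is available only because we sit at the top cohomological degree $cd(\fa,N)=d$, where Melkersson's cofiniteness result for top local cohomology applies; away from that degree cofiniteness of local cohomology modules can fail.
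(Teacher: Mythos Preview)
Your proposal is correct and follows essentially the same route as the paper: split into the cases $cd(\fa,N)<d$ and $cd(\fa,N)=d$, apply Proposition 2.8 in each case, and then in the second case combine [Mel; Proposition 5.1] with the Serre-subcategory property [Mel; Corollary 4.4] to conclude that $Ext^{t}_{R}(M,H^{d}_{\fa}(N))$ is $\fa$-cofinite Artinian. The only difference is cosmetic: where the paper simply says ``Therefore $Ext^{t}_{R}(M,H^{d}_{\fa}(N))$ \ldots\ is an $\fa$-cofinite Artinian module,'' you spell out the free-resolution computation exhibiting this $Ext$ (at the top degree $t=pd(M)$) as a quotient of a finite direct sum of copies of $H^{d}_{\fa}(N)$.
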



\begin{notation}
For an ideal $\fb$ of $R$, we denote the category of finite
$R/\fb$-modules by $\mathcal{C}^{f}(R/\fb)$ and the category of
finite $R$-modules whose support contained in $V(\fb)$ by
$\mathcal{C}^{\fb}(R)$. Also we denote the category of $R$-modules
by $\mathcal{C}(R)$.
\end{notation}

Considering the fact that each member of $\mathcal{C}^{\fa}(R)$
belongs to $\mathcal{C}^{f}(R/\fa^{n})$ for some positive integer
$n$ and appling a straightforward induction on $n$, we deduce the
following useful lemma.


\begin{lemma}Suppose that $T$ is a middle exact covariant
(contravariant) functor from $\mathcal{C}(R)$ to $\mathcal{C}(R)$
and that $\delta$ is a Serre subcategory of $\mathcal{C}(R)$. Then
 $T(L)\in \delta$ for each $L \in \mathcal{C}^{\fa}(R)$ whenever
 $T(L)\in \delta$ for each $L \in \mathcal{C}^{f}(R/\fa)$.
\end{lemma}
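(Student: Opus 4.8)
The plan is to lean on the reduction recorded just before the statement: since $R$ is Noetherian and $\fa$ is finitely generated, any finite $R$-module $L$ with $\Supp(L)\subseteq V(\fa)$ satisfies $\fa^{n}L=0$ for some $n\geq 1$, i.e. $L\in\mathcal{C}^{f}(R/\fa^{n})$. So I would prove, by induction on $n$, the slightly more precise assertion: if $T(K)\in\delta$ for every $K\in\mathcal{C}^{f}(R/\fa)$, then $T(L)\in\delta$ for every $L\in\mathcal{C}^{f}(R/\fa^{n})$. The lemma then follows, because every member of $\mathcal{C}^{\fa}(R)$ lies in $\mathcal{C}^{f}(R/\fa^{n})$ for some $n$.

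The base case $n=1$ is the hypothesis itself. For $n>1$, given $L$ with $\fa^{n}L=0$, I would apply $T$ to the short exact sequence of finite $R$-modules
$$0\longrightarrow \fa^{n-1}L\longrightarrow L\longrightarrow L/\fa^{n-1}L\longrightarrow 0.$$
The submodule $\fa^{n-1}L$ is annihilated by $\fa$ (because $\fa\cdot\fa^{n-1}L=\fa^{n}L=0$), hence lies in $\mathcal{C}^{f}(R/\fa)$, so $T(\fa^{n-1}L)\in\delta$ by hypothesis; the quotient $L/\fa^{n-1}L$ is annihilated by $\fa^{n-1}$, hence lies in $\mathcal{C}^{f}(R/\fa^{n-1})$, so $T(L/\fa^{n-1}L)\in\delta$ by the induction hypothesis. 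Middle exactness of $T$ now yields an exact sequence $T(\fa^{n-1}L)\to T(L)\to T(L/\fa^{n-1}L)$ in the covariant case (and the same sequence with its two outer terms interchanged in the contravariant case). In either case $T(L)$ fits into a short exact sequence $0\to P\to T(L)\to Q\to 0$ in which $P$ is a homomorphic image of an object of $\delta$ and $Q$ is a submodule of an object of $\delta$; since $\delta$ is closed under subobjects, quotients and extensions, we conclude $T(L)\in\delta$, completing the induction.

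I do not anticipate a genuine obstacle: Noetherianness guarantees that $\fa^{n-1}L$ and $L/\fa^{n-1}L$ are again finite, so the inductive hypothesis really applies, and only the defining closure properties of a Serre subcategory are used. The minor points to keep track of are the covariant/contravariant bookkeeping in the final step, and recording at the outset why $\Supp(L)\subseteq V(\fa)$ with $L$ finite forces a power of $\fa$ to annihilate $L$.
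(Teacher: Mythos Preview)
Your argument is correct and is precisely the ``straightforward induction on $n$'' that the paper indicates just before the lemma without spelling out; you have merely filled in the filtration $0\to \fa^{n-1}L\to L\to L/\fa^{n-1}L\to 0$ and the Serre-closure bookkeeping that the paper leaves to the reader.
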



\begin{proposition}(compare [Mel; Corollary 2.5]) Let $X$ be an arbitrary $R$-module, $t$ be a
non-negative integer, and $\delta$ be a Serre subcategory of
$\mathcal{C}(R)$. Suppose $\fb$ be a second ideal of $R$ contains
$\fa$. Then
\begin{enumerate}
  \item[(i)]{If $Ext^{i}_{R}(R/\fa,X)\in \delta$ for all $0\leq i\leq t$, then
   $Ext^{i}_{R}(N,X)\in \delta$ for all $0\leq i\leq t$ and all $N\in \mathcal{C}^{\fa}(R).$
   In particular,  $Ext^{i}_{R}(R/\fb,X)\in \delta$ for all $0\leq i\leq t$}.
  \item[(ii)]{If $Tor^{R}_{i}(R/\fa,X)\in \delta$ for all $0\leq i\leq t$, then
   $Tor^{R}_{i}(N,X)\in \delta$ for all $0\leq i\leq t$ and all $N\in \mathcal{C}^{\fa}(R).$
   In particular,  $Tor^{R}_{i}(R/\fb,X)\in \delta$ for all $0\leq i\leq t.$
   }
\end{enumerate}

\begin{proof}
We just prove (i), the proof for (ii) is similar. First, we use
induction on $i$ to prove that $Ext^{i}_{R}(N,X)\in \delta$ whenever
$N \in \mathcal{C}^{f}(R/\fa)$. Let $i=0$ and $N \in
\mathcal{C}^{f}(R/\fa)$. Considering a suitable  integer $s$ and an
exact sequence $(R/\fa)^{s} \rightarrow N \rightarrow 0$, we obtain
the exact sequence $0 \rightarrow Hom_{R}(N,X) \rightarrow
Hom_{R}((R/\fa)^{s},X)$ where $Hom_{R}((R/\fa)^{s},X) \in \delta $
by  our assumption; so that $Hom_{R}(N,X) \in \delta$ which
completes the proof for the case where $i=0.$

Suppose that $1\leq i\leq t$ and that our assertion holds for $i-1$.
Let $N \in \mathcal{C}^{f}(R/\fa)$. By an exact sequence $0
\rightarrow K \rightarrow (R/\fa)^{s} \rightarrow N \rightarrow 0,$
where $K\in \mathcal{C}^{f}(R/\fa)$ and $s$ is an integer, we obtain
a long exact sequence
$$\cdots \rightarrow Ext^{i-1}_{R}(K,X) \rightarrow
Ext^{i}_{R}(N,X) \rightarrow Ext^{i}_{R}((R/\fa)^{s},X) \rightarrow
\cdots$$ where by the assumption in (i), $
Ext^{i}_{R}((R/\fa)^{s},X)\in \delta$ and by induction hypothesis
$Ext^{i-1}_{R}(K,X) \in \delta$ therefore $Ext^{i}_{R}(N,X) \in
\delta$. This terminates the induction argument. Now, an application
of Lemma 3.3 completes the proof.
\end{proof}
\end{proposition}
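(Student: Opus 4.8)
The plan is to prove part (i) and observe that (ii) follows by the dual argument, replacing $\Ext$ by $\Tor$ and right-exactness by left-exactness as appropriate. The strategy is the one already hinted at in the paper: first establish the conclusion for the subcategory $\mathcal{C}^{f}(R/\fa)$ of finite $R/\fa$-modules by induction on the cohomological degree $i$, and then promote it to all of $\mathcal{C}^{\fa}(R)$ by invoking Lemma 3.3. Concretely, I would fix $X$, $t$ and $\delta$ with $\Ext^{i}_{R}(R/\fa,X)\in\delta$ for $0\le i\le t$, and prove by induction on $i$ (for $0\le i\le t$) the statement ``$\Ext^{i}_{R}(N,X)\in\delta$ for every $N\in\mathcal{C}^{f}(R/\fa)$.''

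For the base case $i=0$: given a finite $R/\fa$-module $N$, choose a surjection $(R/\fa)^{s}\twoheadrightarrow N$ of $R/\fa$-modules. Applying $\Hom_{R}(-,X)$ gives an injection $\Hom_{R}(N,X)\hookrightarrow\Hom_{R}((R/\fa)^{s},X)\cong\Hom_{R}(R/\fa,X)^{s}$. Since $\delta$ is a Serre subcategory, it is closed under finite direct sums and under submodules, so $\Hom_{R}(R/\fa,X)^{s}\in\delta$ and hence $\Hom_{R}(N,X)\in\delta$. For the inductive step $1\le i\le t$: pick a short exact sequence $0\to K\to (R/\fa)^{s}\to N\to 0$ with $K$ again a finite $R/\fa$-module (take $s$ minimal or any finite presentation; the kernel $K$ is finite over the Noetherian ring $R/\fa$ and supported in $V(\fa)$, hence lies in $\mathcal{C}^{f}(R/\fa)$). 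The long exact sequence of $\Ext_{R}(-,X)$ yields
$$\Ext^{i-1}_{R}(K,X)\longrightarrow \Ext^{i}_{R}(N,X)\longrightarrow \Ext^{i}_{R}((R/\fa)^{s},X).$$
The right-hand term is $\Ext^{i}_{R}(R/\fa,X)^{s}\in\delta$ by hypothesis (here we use $i\le t$), and the left-hand term is in $\delta$ by the induction hypothesis applied to $K$. As $\delta$ is closed under quotients and extensions (equivalently, under the formation of a module squeezed between two objects of $\delta$), $\Ext^{i}_{R}(N,X)\in\delta$, completing the induction.

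With this in hand, the functor $T=\Ext^{i}_{R}(-,X)$ is a contravariant additive functor from $\mathcal{C}(R)$ to $\mathcal{C}(R)$ which is middle exact (it sits in the middle of a long exact sequence, so it carries short exact sequences to three-term exact sequences), and we have just shown $T(L)\in\delta$ for all $L\in\mathcal{C}^{f}(R/\fa)$. Lemma 3.3 therefore upgrades this to $T(L)\in\delta$ for all $L\in\mathcal{C}^{\fa}(R)$; applying this for each $i$ with $0\le i\le t$ gives $\Ext^{i}_{R}(N,X)\in\delta$ for all such $i$ and all $N\in\mathcal{C}^{\fa}(R)$. Since $R/\fb\in\mathcal{C}^{\fa}(R)$ whenever $\fa\subseteq\fb$ (its support is $V(\fb)\subseteq V(\fa)$ and it is a finite $R$-module), the ``in particular'' clause is immediate. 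Part (ii) is proved the same way, using a short exact sequence $0\to N\to (R/\fa)^{s'}$ is replaced by a presentation $(R/\fa)^{s}\to N\to 0$ and applying the right-exact functor $-\otimes_{R}X$, then its long exact Tor sequence, with $\Tor^{R}_{i}(-,X)$ playing the role of $T$ in Lemma 3.3.

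The only genuinely delicate point is making sure each intermediate module produced stays inside $\mathcal{C}^{f}(R/\fa)$ so that the induction and Lemma 3.3 apply verbatim; this is harmless because $R/\fa$ is Noetherian, so kernels and cokernels of maps between finite $R/\fa$-modules are again finite $R/\fa$-modules, and all the $\Ext$ and $\Tor$ modules appearing are finitely generated over $R$ since $R/\fa$ is module-finite over itself. Everything else is a formal manipulation of Serre subcategories (closure under subobjects, quotients, extensions, and finite direct sums) together with the long exact sequences, so no real obstacle arises beyond this bookkeeping.
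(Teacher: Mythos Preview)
Your proof is correct and follows essentially the same approach as the paper: induction on $i$ for $N\in\mathcal{C}^{f}(R/\fa)$ via free $R/\fa$-presentations and the long exact sequence, followed by Lemma~3.3 to pass to all of $\mathcal{C}^{\fa}(R)$. One minor slip in your closing commentary: the $\Ext$ and $\Tor$ modules appearing need not be finitely generated over $R$ (since $X$ is arbitrary), but you never actually use that claim, so the argument is unaffected.
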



\begin{proposition}(compare [Mel; Corollary 3.14]) Suppose that $X$ is an arbitrary
$R$-module and that
 $cd(\fa,X)=1$. Then for all
$i\geq 0$, $Ext^{i}_{R}(R/\fa,H^{1}_{\fa}(X))\cong
Ext^{i+1}_{R}(R/\fa,X/\Gamma_{\fa}(X)).$ In particular, if $X$ is
finite, then $H^{i}_{\fa}(X)$ is $\fa$-cofinite for all $i.$

\begin{proof} Since $cd(\fa,X)=1$, $H^{i}_{\fa}(D_{\fa}(X))=0$
 for all $i\geq 0$. Hence $Ext^{i}_{R}(R/\fa,D_{\fa}(X))=0$  for all $i\geq
 0$  by Theorem 2.2. Now, by the ideal transform sequence
 $$0 \rightarrow X/\Gamma_{\fa}(X) \rightarrow  D_{\fa}(X) \rightarrow H^{1}_{\fa}(X) \rightarrow 0,$$
 we get $Ext^{i}_{R}(R/\fa,H^{1}_{\fa}(X))\cong
Ext^{i+1}_{R}(R/\fa,X/\Gamma_{\fa}(X))$ for all $i\geq 0$.
\end{proof}
\end{proposition}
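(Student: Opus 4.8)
The plan is to exploit the hypothesis $\cd(\fa,X)=1$ to kill the ideal transform $D_\fa(X)$ cohomologically, and then read off the desired $\Ext$-isomorphisms from the ideal transform exact sequence. First I would observe that the exact sequences $0\to\Gamma_\fa(X)\to X\to X/\Gamma_\fa(X)\to 0$ and the standard fact $H^i_\fa(\Gamma_\fa(X))=0$ for $i>0$ give $H^i_\fa(X/\Gamma_\fa(X))\cong H^i_\fa(X)$ for $i>0$, together with $\Gamma_\fa(X/\Gamma_\fa(X))=0$; in particular $\cd(\fa,X/\Gamma_\fa(X))=\cd(\fa,X)=1$. Then I would invoke the ideal transform exact sequence $0\to X/\Gamma_\fa(X)\to D_\fa(X)\to H^1_\fa(X)\to 0$ (see [BS]), noting $\Gamma_\fa(D_\fa(X))=H^1_\fa(D_\fa(X))=0$ by [BS; Corollary 2.2.8], while for $i\geq 2$ the long exact sequence in $H^\bullet_\fa(-)$ applied to this short exact sequence gives $H^i_\fa(D_\fa(X))\cong H^{i-1}_\fa(H^1_\fa(X))=0$ because $H^1_\fa(X)$ is $\fa$-torsion; hence $H^i_\fa(D_\fa(X))=0$ for all $i\geq 0$.

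The key step is then a direct application of Theorem 2.2: since $H^i_\fa(D_\fa(X))=0$ for all $i\geq 0$, the module $D_\fa(X)$ satisfies the equivalent conditions of that theorem for every $t$, so $\Ext^i_R(R/\fa,D_\fa(X))=0$ for all $i\geq 0$. Feeding this vanishing into the long exact sequence obtained by applying the derived functors of $\Hom_R(R/\fa,-)$ to the short exact sequence $0\to X/\Gamma_\fa(X)\to D_\fa(X)\to H^1_\fa(X)\to 0$, the connecting homomorphisms become isomorphisms: from $\Ext^i_R(R/\fa,D_\fa(X))\to\Ext^i_R(R/\fa,H^1_\fa(X))\to\Ext^{i+1}_R(R/\fa,X/\Gamma_\fa(X))\to\Ext^{i+1}_R(R/\fa,D_\fa(X))$ with the outer terms zero, we get $\Ext^i_R(R/\fa,H^1_\fa(X))\cong\Ext^{i+1}_R(R/\fa,X/\Gamma_\fa(X))$ for all $i\geq 0$ (the case $i=0$ also uses that $\Hom_R(R/\fa,D_\fa(X))=0$ so there is no correction term on the left).

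For the ``in particular'' clause, suppose $X$ is finite. Then $\Gamma_\fa(X)$ and $X/\Gamma_\fa(X)$ are finite, so $\Ext^j_R(R/\fa,X/\Gamma_\fa(X))$ is finite for every $j$; by the isomorphism just established, $\Ext^i_R(R/\fa,H^1_\fa(X))$ is finite for all $i$, and since $\Supp(H^1_\fa(X))\subseteq\V(\fa)$ this says precisely that $H^1_\fa(X)=H^{\cd(\fa,X)}_\fa(X)$ is $\fa$-cofinite. For the remaining cohomology modules: $\Gamma_\fa(X)$ is a finite $\fa$-torsion module, hence $\fa$-cofinite (each $\Ext^i_R(R/\fa,\Gamma_\fa(X))$ is finite), while $H^i_\fa(X)=0$ for all $i\geq 2$ by the hypothesis $\cd(\fa,X)=1$, and the zero module is trivially $\fa$-cofinite. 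Thus $H^i_\fa(X)$ is $\fa$-cofinite for all $i$. I do not anticipate a serious obstacle here; the only point requiring a little care is the bookkeeping at $i=0$ in the long exact sequence and making sure the vanishing of $H^\bullet_\fa(D_\fa(X))$ is correctly deduced in degrees $\geq 2$ from the $\fa$-torsion-ness of $H^1_\fa(X)$.
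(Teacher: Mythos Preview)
Your proof is correct and follows essentially the same route as the paper: establish $H^i_\fa(D_\fa(X))=0$ for all $i$, invoke Theorem~2.2 to get $\Ext^i_R(R/\fa,D_\fa(X))=0$, and read off the isomorphisms from the long exact $\Ext$-sequence of the ideal transform sequence. You also supply the ``in particular'' argument, which the paper leaves implicit. One small bookkeeping slip: for $i\geq 2$ the long exact sequence yields $H^i_\fa(D_\fa(X))\cong H^i_\fa(X/\Gamma_\fa(X))=0$ (using both that $H^1_\fa(X)$ is $\fa$-torsion and that $\cd(\fa,X)=1$), not the isomorphism $H^i_\fa(D_\fa(X))\cong H^{i-1}_\fa(H^1_\fa(X))$ you wrote; the conclusion is unaffected since both sides vanish, and the paper sidesteps this by citing directly that $H^i_\fa(D_\fa(X))\cong H^i_\fa(X)$ for $i\geq 2$.
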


The next proposition introduces some finite generalized local
cohomology modules.


\begin{proposition} Suppose that $M,N$ are finite $R$-modules
such that $M=\Gamma_{\fa}(M)$ and $cd(\fa,N)=1.$ Then
$H^{i}_{\fa}(M,N)$ is finite for all $i\geq 0$.

\begin{proof} Since $cd(\fa,N)=1$, $H^{i}_{\fa}(D_{\fa}(N))=0$
 for all $i\geq 0$. Hence $H^{i}_{\fa}(M,D_{\fa}(N))=0$
 whenever $i\geq 0$  by Proposition 2.3. Then the ideal transform sequence
 $$0 \rightarrow N/\Gamma_{\fa}(N) \rightarrow  D_{\fa}(N) \rightarrow H^{1}_{\fa}(N) \rightarrow
 0$$
 in conjunction with [YKS;
 Lemma 1.1] yield  the  isomorphisms below:\\
$H^{i+1}_{\fa}(M,N/\Gamma_{\fa}(N))\cong
H^{i}_{\fa}(M,H^{1}_{\fa}(N))\cong
 Ext^{i}_{R}(M,H^{1}_{\fa}(N))$ and $H^{i}_{\fa}(M,\Gamma_{\fa}(N))\cong
 Ext^{i}_{R}(M,\Gamma_{\fa}(N))$
for all $i\geq 0$. By Proposition 3.5 $\Gamma_{\fa}(N)$ and
$H^{1}_{\fa}(N)$ are
 $\fa$-cofinite and, thus, by Proposition 3.4 $Ext^{i}_{R}(M,H^{1}_{\fa}(N))$ and
 $Ext^{i}_{R}(M,\Gamma_{\fa}(N))$ are finite for all $i\geq 0$.
Thus
 $H^{i}_{\fa}(M,N/\Gamma_{\fa}(N))$ and $H^{i}_{\fa}(M,\Gamma_{\fa}(N))$
 are finite for all $i\geq 0$ which in turn shows that $H^{i}_{\fa}(M,N)$ is finite for all $i\geq
 0$.
\end{proof}
\end{proposition}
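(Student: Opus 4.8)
The plan is to reduce the computation of $H^i_{\fa}(M,N)$ to computations of $\Ext$ against the two pieces $\Gamma_{\fa}(N)$ and $H^1_{\fa}(N)$ coming from the ideal transform sequence, and then invoke the cofiniteness results (Propositions 3.4 and 3.5) to deduce finiteness. First I would note that since $\cd(\fa,N)=1$, every local cohomology module of the ideal transform $D_{\fa}(N)$ vanishes, i.e. $H^i_{\fa}(D_{\fa}(N))=0$ for all $i\geq 0$ (this is the standard property of ideal transforms from [BS]). Applying Proposition 2.3 (with $X=D_{\fa}(N)$, which satisfies the equivalent conditions of Theorem 2.2 for every $t$), I get $H^i_{\fa}(M,D_{\fa}(N))=0$ for all $i\geq 0$. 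Feeding the ideal transform short exact sequence
$$0 \longrightarrow N/\Gamma_{\fa}(N) \longrightarrow D_{\fa}(N) \longrightarrow H^1_{\fa}(N) \longrightarrow 0$$
into the long exact sequence of $H^\bullet_{\fa}(M,-)$ then yields $H^{i+1}_{\fa}(M,N/\Gamma_{\fa}(N)) \cong H^i_{\fa}(M,H^1_{\fa}(N))$ for all $i\geq 0$, together with $H^0_{\fa}(M,H^1_{\fa}(N)) \cong H^0_{\fa}(M,H^1_{\fa}(N))$ at the bottom (and $H^0_{\fa}(M,N/\Gamma_{\fa}(N))=0$, since $\Gamma_{\fa}(N/\Gamma_{\fa}(N))=0$ forces $\Gamma_{\fa}(M,N/\Gamma_{\fa}(N))=0$ by Lemma 2.1(i)).

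Next I would handle each piece. Because $M=\Gamma_{\fa}(M)$, the module $M$ lies in $\mathcal{C}^{\fa}(R)$, so $\fa^n M=0$ for some $n$ and thus $H^i_{\fa}(M,P) \cong \Ext^i_R(M,P)$ whenever $\Supp(P)\subseteq V(\fa)$ — this is exactly the case covered by [YKS; Lemma 1.1], since then $M/\fa^k M = M$ is eventually constant in the direct limit defining $H^i_{\fa}(M,P)$. Both $\Gamma_{\fa}(N)$ and $H^1_{\fa}(N)$ have support in $V(\fa)$, so I obtain $H^i_{\fa}(M,\Gamma_{\fa}(N)) \cong \Ext^i_R(M,\Gamma_{\fa}(N))$ and $H^i_{\fa}(M,H^1_{\fa}(N)) \cong \Ext^i_R(M,H^1_{\fa}(N))$ for all $i\geq 0$. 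Now Proposition 3.5 applies: since $N$ is finite and $\cd(\fa,N)=1$, the modules $H^i_{\fa}(N)$ are all $\fa$-cofinite, in particular $\Gamma_{\fa}(N)$ and $H^1_{\fa}(N)$ are $\fa$-cofinite. Then Proposition 3.4 (applied with $X=\Gamma_{\fa}(N)$ and $X=H^1_{\fa}(N)$ and with $\delta$ the Serre subcategory $\mathcal{C}^f(R)$ of finite modules, using that $\Ext^i_R(R/\fa,X)$ is finite for $\fa$-cofinite $X$) shows that $\Ext^i_R(M,\Gamma_{\fa}(N))$ and $\Ext^i_R(M,H^1_{\fa}(N))$ are finite for all $i\geq 0$, as $M\in\mathcal{C}^{\fa}(R)$.

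Finally, assembling: $H^i_{\fa}(M,\Gamma_{\fa}(N))$ is finite for all $i$, and from the isomorphism $H^{i+1}_{\fa}(M,N/\Gamma_{\fa}(N)) \cong \Ext^i_R(M,H^1_{\fa}(N))$ (together with the vanishing in degree $0$) we get $H^i_{\fa}(M,N/\Gamma_{\fa}(N))$ finite for all $i\geq 0$. The short exact sequence $0 \to \Gamma_{\fa}(N) \to N \to N/\Gamma_{\fa}(N) \to 0$ and its long exact sequence in $H^\bullet_{\fa}(M,-)$ then sandwich $H^i_{\fa}(M,N)$ between two finite modules, so it is finite for every $i\geq 0$. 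The only mildly delicate point is making sure the identification $H^i_{\fa}(M,P)\cong\Ext^i_R(M,P)$ for $\Supp(P)\subseteq V(\fa)$ is legitimately available — this is precisely what [YKS; Lemma 1.1] provides, and it is the hinge that converts the generalized local cohomology into ordinary $\Ext$, where Propositions 3.4 and 3.5 can do the work; everything else is a routine chase through the two short exact sequences.
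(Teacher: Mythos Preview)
Your proof is correct and follows essentially the same route as the paper's: vanish $H^i_{\fa}(M,D_{\fa}(N))$ via Proposition~2.3, use the ideal transform sequence together with [YKS; Lemma~1.1] to convert the relevant generalized local cohomology into $\Ext^i_R(M,-)$ against $\Gamma_{\fa}(N)$ and $H^1_{\fa}(N)$, invoke Propositions~3.5 and~3.4 to get finiteness of those Ext modules, and then glue via $0\to\Gamma_{\fa}(N)\to N\to N/\Gamma_{\fa}(N)\to 0$. You are simply more explicit than the paper about the degree-zero vanishing and the final sandwich step.
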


K.-I. Yoshida [Yo] and, D. Delfino and T. Marley [DM] proved that in
the case $R$ is local and $dimR/\fa=1$, $H^{i}_{\fa}(M)$ is
$\fa$-cofinite for all $i\geq 0$ and all finite $R$-modules $M$.
K.-I. Kawasaki [K] proved the same property of local cohomology
modules in the case where $\fa$ is principle.

As a natural generalization to the above facts, S. Yassemi [Y2;
Question 2.7] proposed a question whether the same results hold for
generalized local cohomology modules. This question was
affirmatively answered by K. Divaani-Aazar and R. Sazeedeh in [DS].
In the next theorem and its corollaries, we recover the main
theorems of [DS] in  Corollary 3.9 and Corollary 3.10; furthermore a
wider class of ideals and $R$-modules are provided for which the
corresponding generalized local cohomology modules are cofinite.


\begin{theorem}Suppose that $M,N$ are finite $R$-modules with
$pd(M)=t<\infty$ and $H^{i}_{\fa}(N)$ is $\fa$-cofinite for all
$i\geq 0$; suppose also that the category of $\fa$-cofinite
$R$-modules is Abelian. Then $H^{i}_{\fa}(M,N)$ is $\fa$-cofinite
for all $i\geq 0$.

\begin{proof} We prove by using induction on $t$. In the case where
$t=0$, $M$ is a projective $R$-module; so that there exist an
$R$-module $M'$ and an integer $s$ such that $M\oplus M'\cong
R^{s}$. Hence by Lemma 2.5, for each $i\geq 0$,
$$H^{i}_{\fa}(M,N)\oplus H^{i}_{\fa}(M',N)\cong Hom_{R}(M,H^{i}_{\fa}(N))\oplus Hom_{R}(M',H^{i}_{\fa}(N))$$
$$ \cong Hom_{R}(R^{s},H^{i}_{\fa}(N))\cong H^{i}_{\fa}(N)^{s}.$$
Which implies the assertion in this case.

Suppose that $t>0$ and that the assertion holds for all finite
$R$-modules with projective dimension less than $t$. Considering the
exact sequence
$$0 \rightarrow K \rightarrow  R^{n} \rightarrow M \rightarrow 0$$
where $K$ is an $R$-module and $n$ is an integer, and we obtain the
long exact sequence
$$\cdots \rightarrow H^{i-1}_{\fa}(R^{n},N) \rightarrow H^{i-1}_{\fa}(K,N) \rightarrow H^{i}_{\fa}(M,N) \rightarrow H^{i}_{\fa}(R^{n},N) \rightarrow H^{i}_{\fa}(K,N) \rightarrow \cdots.$$
By induction hypothesis
$H^{i-1}_{\fa}(R^{n},N),H^{i-1}_{\fa}(K,N),H^{i}_{\fa}(R^{n},N)$ and
$H^{i}_{\fa}(K,N)$ are $\fa$-cofinite while by our assumption, the
category of $\fa$-cofinite $R$-modules is Abelian. Therefore
$H^{i}_{\fa}(M,N)$ is $\fa$-cofinite as we desired.
\end{proof}
\end{theorem}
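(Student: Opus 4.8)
The plan is to mimic exactly the inductive structure of Proposition 2.7 and Proposition 3.1, running an induction on the projective dimension $t=\pd(M)$ and feeding the long exact sequence associated to a free presentation of $M$ into the hypothesis that the category of $\fa$-cofinite $R$-modules is Abelian. First I would dispose of the base case $t=0$: a projective finite module $M$ is a direct summand of a free module $R^{s}$, so Lemma 2.5 gives $H^{i}_{\fa}(M,N)\oplus H^{i}_{\fa}(M',N)\cong \Hom_{R}(R^{s},H^{i}_{\fa}(N))\cong H^{i}_{\fa}(N)^{s}$, and since $H^{i}_{\fa}(N)$ is $\fa$-cofinite by hypothesis, finite direct sums and direct summands of $\fa$-cofinite modules are $\fa$-cofinite (the former is elementary, the latter uses that the category is Abelian, hence in particular closed under direct summands), so $H^{i}_{\fa}(M,N)$ is $\fa$-cofinite for every $i\geq 0$.

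For the inductive step, suppose $t>0$ and the statement holds for all finite modules of projective dimension $<t$. Choose a short exact sequence $0\to K\to R^{n}\to M\to 0$ with $K$ finite; then $\pd(K)=t-1<t$, so by the induction hypothesis $H^{j}_{\fa}(K,N)$ is $\fa$-cofinite for all $j$, and $H^{j}_{\fa}(R^{n},N)\cong H^{j}_{\fa}(N)^{n}$ is $\fa$-cofinite for all $j$ as in the base case. The long exact sequence of generalized local cohomology in the second-variable-fixed setting,
$$\cdots \to H^{i-1}_{\fa}(R^{n},N)\to H^{i-1}_{\fa}(K,N)\to H^{i}_{\fa}(M,N)\to H^{i}_{\fa}(R^{n},N)\to H^{i}_{\fa}(K,N)\to\cdots,$$
then exhibits $H^{i}_{\fa}(M,N)$ sandwiched between $\fa$-cofinite modules: its submodule $\coker\!\big(H^{i-1}_{\fa}(R^{n},N)\to H^{i-1}_{\fa}(K,N)\big)$ and its quotient $\ker\!\big(H^{i}_{\fa}(R^{n},N)\to H^{i}_{\fa}(K,N)\big)$ are both $\fa$-cofinite because the category of $\fa$-cofinite modules is Abelian (hence closed under kernels and cokernels of its morphisms), and then $H^{i}_{\fa}(M,N)$ itself is $\fa$-cofinite since an Abelian subcategory of $\mathcal{C}(R)$ is in particular closed under extensions. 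This completes the induction.

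The only genuinely delicate point is making sure the long exact sequence is available and that $\pd(K)=\pd(M)-1$: the former follows from the fact that $H^{i}_{\fa}(-,N)$ is the derived functor in the first variable (equivalently, apply $(*)$ or the definition $H^{i}_{\fa}(M,N)=\varinjlim_n \Ext^{i}_R(M/\fa^n M,N)$ together with the long exact sequence of $\Ext$ and exactness of direct limits), and the latter is the standard syzygy computation for finite projective dimension over a Noetherian ring. I do not expect any real obstacle here; the whole content is packaged into the hypothesis that $\fa$-cofinite modules form an Abelian category, which is precisely what lets the three-term "sandwich" argument go through — without it one would only get Artinianness or the weaker conclusions of Propositions 2.7 and 3.1. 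It is worth remarking that since $\Ext^{i}_R(R/\fa,-)$ vanishes outside $0\le i\le \pd(M)+\cd(\fa,N)$ on the relevant modules, one could phrase a finite-range version, but the stated infinite-range conclusion is immediate from the induction as written.
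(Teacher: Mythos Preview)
Your proof is correct and follows essentially the same route as the paper's: induction on $\pd(M)$, Lemma~2.5 for the projective base case, and the long exact sequence coming from a free presentation $0\to K\to R^{n}\to M\to 0$ for the inductive step, invoking the Abelian hypothesis to control the middle term. One small caveat: your parenthetical that ``an Abelian subcategory of $\mathcal{C}(R)$ is in particular closed under extensions'' is not true in general (e.g.\ semisimple modules), but this is harmless here since closure of $\fa$-cofinite modules under extensions is an elementary consequence of the long exact sequence of $\Ext^{\bullet}_{R}(R/\fa,-)$---and indeed the paper's own proof tacitly uses the same fact.
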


Applying [Mel; Theorems 7.4  and  7.10] and the above theorem it is
straightforward to deduce the following corollary.


\begin{corollary} Suppose that $dimR\leq 2$ and $M,N$ are finite
$R$-modules with $pd(M)<\infty$. Then $H^{i}_{\fa}(M,N)$ is
$\fa$-cofinite for all $i\geq 0$.
\end{corollary}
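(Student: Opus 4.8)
The plan is to reduce Corollary 3.9 to Theorem 3.7 by verifying its two hypotheses on $N$ and on the category of $\fa$-cofinite modules. Theorem 3.7 already handles the module $M$ via the assumption $pd(M)<\infty$, so all that remains is (a) to show that $H^{i}_{\fa}(N)$ is $\fa$-cofinite for every $i\geq 0$ when $N$ is finite and $\dim R\leq 2$, and (b) to show that the category of $\fa$-cofinite $R$-modules is Abelian under the same dimension hypothesis. Both of these are exactly what the cited results [Mel; Theorems 7.4 and 7.10] are designed to give, so the work is essentially bookkeeping.

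First I would invoke [Mel; Theorem 7.10]: since $\dim R\leq 2$ and $N$ is a finite $R$-module, the ordinary local cohomology modules $H^{i}_{\fa}(N)$ are $\fa$-cofinite for all $i\geq 0$. (One could also split this according to $\dim R/\fa$: if $\dim R/\fa\leq 1$ this is the Yoshida / Delfino–Marley theorem recalled just before Theorem 3.7, and if $\dim R/\fa=2$ one needs the extra input that $\dim R\leq 2$ forces $\fa=0$ or the relevant cohomological dimension to be small; but citing [Mel; Theorem 7.10] directly is cleaner.) Next I would invoke [Mel; Theorem 7.4]: when $\dim R\leq 2$, the category of $\fa$-cofinite $R$-modules is Abelian, i.e.\ the kernel and cokernel (computed in $\mathcal{C}(R)$) of any morphism between $\fa$-cofinite modules are again $\fa$-cofinite. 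With these two facts in hand, the hypotheses of Theorem 3.7 are met for the pair $(M,N)$, and the conclusion that $H^{i}_{\fa}(M,N)$ is $\fa$-cofinite for all $i\geq 0$ follows immediately.

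I do not expect a genuine obstacle here; the only point requiring a little care is making sure the dimension hypothesis $\dim R\leq 2$ is the one actually used by both cited Melkersson results (as opposed to $\dim R/\fa\leq 2$ or a Gorenstein/regularity hypothesis), and that "Abelian" in the statement of Theorem 3.7 is interpreted as "closed under kernels and cokernels taken inside $\mathcal{C}(R)$," which is precisely what [Mel; Theorem 7.4] asserts. Once the quotations are lined up correctly, the proof is a one-line appeal to Theorem 3.7.
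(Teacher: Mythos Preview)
Your proposal is correct and matches the paper's own argument exactly: the paper's proof consists solely of the sentence ``Applying [Mel; Theorems 7.4 and 7.10] and the above theorem it is straightforward to deduce the following corollary,'' which is precisely the reduction to Theorem 3.7 via Melkersson's two results that you describe. Your only caveat about checking which Melkersson theorem supplies which hypothesis is well taken but does not affect the validity of the approach.
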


The next two corollaries have already been proved in [DS; Theorems
2.8 and 2.9]. Here we present new proofs based on Theorem 3.7.


\begin{corollary} Suppose that $\fa$ is a principle ideal. Then
for finite $R$-modules $M,N$ with $pd(M)<\infty$, $H^{i}_{\fa}(M,N)$
is $\fa$-cofinite for all $i\geq 0$.

\begin{proof} According to Proposition 3.5 and Theorem 3.7 it is enough to
prove that the category of $\fa$-cofinite $R$-modules is Abelian.

Let $f:L\rightarrow T$ be an $R$-homomorphism where $L,T$ are two
$\fa$-cofinite $R$-modules. To prove that the category of
$\fa$-cofinite $R$-modules is Abelian  we need only to show that
$I=imf$ is an $\fa$-cofinite $R$-module.

Let $K=kerf$ and $C=cokerf$. Considering the exact
sequences\\
\centerline{$0 \rightarrow I \rightarrow T \rightarrow C \rightarrow
0~~~$ and $~~~0 \rightarrow K \rightarrow L \rightarrow I
\rightarrow 0,$}
one can obtain  exact sequences\\
\centerline{$0 \rightarrow 0:_{I}\fa \rightarrow 0:_{T}\fa
\rightarrow 0:_{C}\fa~~~$ and $~~~K/\fa K \rightarrow L/\fa L
\rightarrow I/\fa I \rightarrow 0.$} The latter exact sequences
imply  that $0:_{I}\fa$ and $I/\fa I$ are finite.
 Therefore, by [Mel; Corollary 3.4], $I$
is $\fa$-cofinite as we desired.
\end{proof}
\end{corollary}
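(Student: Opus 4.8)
The plan is to reduce the statement to a single technical point: that the category of $\fa$-cofinite $R$-modules is Abelian when $\fa$ is principal. Once that is established, the result is immediate by combining Proposition 3.5 (which gives that $H^i_\fa(N)$ is $\fa$-cofinite for all $i$ whenever $N$ is finite and $\fa$ is principal, since then $\cd(\fa,N)\leq 1$) with Theorem 3.7. So the entire proof is really the verification of the Abelian property, and everything else is bookkeeping.

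To verify the Abelian property, I would take an arbitrary $R$-homomorphism $f\colon L\to T$ between $\fa$-cofinite modules and show that $\im f$ is $\fa$-cofinite; this suffices because the category is already closed under kernels and cokernels of maps between arbitrary modules, and the only thing at issue for an Abelian subcategory is that $\ker f$, $\coker f$, and $\im f$ stay inside the category. Write $K=\ker f$, $C=\coker f$, $I=\im f$, and consider the two short exact sequences $0\to I\to T\to C\to 0$ and $0\to K\to L\to I\to 0$. Applying $\Hom_R(R/\fa,-)$ to the first and $-\otimes_R R/\fa$ to the second yields exact sequences $0\to (0:_I\fa)\to(0:_T\fa)\to(0:_C\fa)$ and $K/\fa K\to L/\fa L\to I/\fa I\to 0$. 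From $\fa$-cofiniteness of $T$ and $L$ (via the characterization in terms of finiteness of $\Hom_R(R/\fa,-)$ and of $R/\fa\otimes_R -$) the modules $0:_T\fa$ and $L/\fa L$ are finite, hence so are the submodule $0:_I\fa$ and the quotient $I/\fa I$. Since $\Supp(I)\subseteq\Supp(T)\subseteq V(\fa)$, the criterion [Mel; Corollary 3.4] — which says a module supported in $V(\fa)$ with $0:_X\fa$ and $X/\fa X$ finite is $\fa$-cofinite — forces $I$ to be $\fa$-cofinite. Then $C$ (a quotient of $T$ by $I$) and $K$ are also $\fa$-cofinite by the same two-term finiteness criterion, or simply because once $I$ is known to be $\fa$-cofinite the long exact $\Ext$ sequences handle the rest.

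The main obstacle is conceptual rather than computational: one must know the equivalences packaged in Melkersson's work, namely that $\fa$-cofiniteness of a module supported in $V(\fa)$ can be detected on the single invariants $0:_X\fa$ and $X/\fa X$ rather than on the whole family of $\Ext$ or $\Tor$ modules. Without that reduction one would be stuck trying to control $\Ext^i_R(R/\fa,I)$ for all $i$ directly from the two short exact sequences, which is awkward since $I$ sits as neither a pure sub nor a pure quotient in a single sequence. With the criterion in hand, the argument is just the two diagram chases indicated above. I would therefore present the proof by first invoking Proposition 3.5 and Theorem 3.7 to isolate the Abelian claim, then carrying out the four-line verification with the short exact sequences and [Mel; Corollary 3.4].

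\begin{proof}
According to Proposition 3.5 and Theorem 3.7 it is enough to prove that the category of $\fa$-cofinite $R$-modules is Abelian.

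Let $f\colon L\rightarrow T$ be an $R$-homomorphism where $L,T$ are two $\fa$-cofinite $R$-modules. To prove that the category of $\fa$-cofinite $R$-modules is Abelian we need only to show that $I=\im f$ is an $\fa$-cofinite $R$-module.

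Let $K=\ker f$ and $C=\coker f$. Considering the exact sequences\\
\centerline{$0 \rightarrow I \rightarrow T \rightarrow C \rightarrow 0~~~$ and $~~~0 \rightarrow K \rightarrow L \rightarrow I \rightarrow 0,$}
one can obtain exact sequences\\
\centerline{$0 \rightarrow 0:_{I}\fa \rightarrow 0:_{T}\fa \rightarrow 0:_{C}\fa~~~$ and $~~~K/\fa K \rightarrow L/\fa L \rightarrow I/\fa I \rightarrow 0.$}
Since $L$ and $T$ are $\fa$-cofinite, $L/\fa L$ and $0:_{T}\fa$ are finite, so the latter exact sequences imply that $0:_{I}\fa$ and $I/\fa I$ are finite. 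Moreover $\Supp(I)\subseteq \Supp(T)\subseteq V(\fa)$. Therefore, by [Mel; Corollary 3.4], $I$ is $\fa$-cofinite as we desired.
\end{proof}
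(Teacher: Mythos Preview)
Your proof is correct and follows exactly the same route as the paper: reduce via Proposition~3.5 and Theorem~3.7 to the Abelian property, then for a map $f\colon L\to T$ of $\fa$-cofinite modules use the two short exact sequences to see that $0:_{I}\fa$ and $I/\fa I$ are finite, and conclude by [Mel; Corollary~3.4]. Your write-up even adds a couple of clarifying remarks (the explicit support inclusion and the reason $\cd(\fa,N)\le 1$) that the paper leaves implicit.
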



\begin{corollary} Suppose that $\fp$ is a prime ideal of a
complete local ring $(R,\fm)$ with $dimR/\fp=1$. Then for finite
$R$-modules $M,N$ with $pd(M)<\infty$, $H^{i}_{\fp}(M,N)$ is
$\fp$-cofinite for all $i\geq 0$.

\begin{proof} Our assumption on $\fp$ in conjunction with
[DM; Theorem 2] imply that the category of $\fp$-cofinite
$R$-modules is Abelian; so that this corollary is an consequence of
Theorem 3.7 and the mentioned theorem of Yoshida  and, Delfino and
Marley.
\end{proof}
\end{corollary}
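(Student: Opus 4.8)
The plan is to deduce Corollary 3.11 directly from Theorem 3.7 by checking that its hypotheses are met for the ideal $\fp$. Theorem 3.7, applied to $M$, $N$ and $\fp$, needs exactly three inputs: that $pd(M)<\infty$, which is assumed; that $H^{i}_{\fp}(N)$ is $\fp$-cofinite for every $i\geq 0$; and that the category of $\fp$-cofinite $R$-modules is Abelian. So the whole argument reduces to supplying the last two items.

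For the cofiniteness of the ordinary local cohomology modules $H^{i}_{\fp}(N)$, I would invoke the theorem of Yoshida [Yo] and, independently, Delfino and Marley [DM] recalled just before the statement: since $(R,\fm)$ is local and $\dim R/\fp=1$, $H^{i}_{\fp}(N)$ is $\fp$-cofinite for all $i\geq 0$ and every finite $R$-module $N$. (Note that $\Supp H^{i}_{\fp}(N)\subseteq V(\fp)$ is automatic, so $\fp$-cofiniteness here is just finiteness of $Ext^{j}_{R}(R/\fp,H^{i}_{\fp}(N))$ for all $j$.) For the Abelian-ness of the category of $\fp$-cofinite modules, I would cite [DM; Theorem 2], which gives precisely this conclusion when $R$ is a complete local ring and $\fp$ is an ideal with $\dim R/\fp=1$; this is the only place where the completeness of $R$ enters. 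Feeding these two facts into Theorem 3.7 yields at once that $H^{i}_{\fp}(M,N)$ is $\fp$-cofinite for all $i\geq 0$.

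I do not expect a genuine obstacle here, since all the substantive work — the induction on $pd(M)$ via the long exact sequence coming from a free presentation of $M$, together with Lemma 2.5 for the base case $pd(M)=0$ — has already been carried out in Theorem 3.7, and the hard analytic facts about ordinary local cohomology are quoted from [Yo], [DM]. The only point requiring care is to match hypotheses precisely: the Yoshida/Delfino–Marley cofiniteness statement needs only that $R$ is local with $\dim R/\fp=1$, whereas [DM; Theorem 2] genuinely requires $R$ complete, and both are available by assumption. An alternative would be to re-run the induction of Theorem 3.7 by hand in this special case (as is done for principal ideals in Corollary 3.10, where the Abelian property is instead proved directly from [Mel; Corollary 3.4]), but that would only duplicate an argument already in place.
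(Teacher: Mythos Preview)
Your proposal is correct and matches the paper's own proof essentially line for line: both verify the two nontrivial hypotheses of Theorem 3.7 by citing [DM; Theorem 2] for the Abelian-ness of the category of $\fp$-cofinite modules (using completeness and $\dim R/\fp=1$) and the Yoshida/Delfino--Marley result for the $\fp$-cofiniteness of $H^{i}_{\fp}(N)$, then conclude. Your additional remarks on where completeness is actually needed and on the alternative of rerunning the induction by hand are accurate but go beyond what the paper records.
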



\begin{corollary} Suppose that $(R,\fm)$ is a local ring such
that $R/\fa$ is analytically irreducible (for example regular) of
dimension one. Then for finite $R$-modules $M,N$ with
$pd(M)<\infty$,  $H^{i}_{\fa}(M,N)$ is $\fa$-cofinite for all $i\geq
0$.

\begin{proof}
As $\widehat{R}/\widehat{\fa}$ is a complete local domain, by
Corollary 3.10 $H^{i}_{\widehat{\fa}}(M\otimes_{R}
\widehat{R},N\otimes_{R} \widehat{R})$ and thus, by [DST; Lemma 2.1
(ii)], $H^{i}_{\fa}(M,N)\otimes_{R} \widehat{R}$ is
$\widehat{\fa}$-cofinite for all $i\geq 0$. Now, [Mel2; Theorem
11.5(ii)] implies that $H^{i}_{\fa}(M,N)$ is $\fa$-cofinite for all
$i\geq 0$.
\end{proof}
\end{corollary}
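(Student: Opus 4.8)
The plan is to reduce the statement to Corollary 3.10 by passing to the completion $\widehat{R}$, where $R/\fa$ analytically irreducible means precisely that $\widehat{R}/\widehat{\fa}$ is a domain, and then to descend the $\fa$-cofiniteness back to $R$. First I would observe that since $(R,\fm)$ is local and $R/\fa$ is analytically irreducible of dimension one, $\widehat{R}/\widehat{\fa}\cong \widehat{R/\fa}$ is a complete local domain of dimension one, so $\widehat{\fa}$ is (up to radical) a prime ideal $\fp$ of $\widehat{R}$ with $\dim \widehat{R}/\fp=1$; thus Corollary 3.10 applies to $\widehat{R}$ with the ideal $\widehat{\fa}$ and the finite $\widehat{R}$-modules $M\otimes_R\widehat{R}$ and $N\otimes_R\widehat{R}$, whose projective dimension over $\widehat{R}$ is still finite (equal to $pd_R(M)$ by flatness and local base change). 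Hence $H^{i}_{\widehat{\fa}}(M\otimes_R\widehat{R},N\otimes_R\widehat{R})$ is $\widehat{\fa}$-cofinite for all $i\geq 0$.

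Next I would invoke the base-change behaviour of generalized local cohomology under the flat extension $R\to\widehat{R}$, namely [DST; Lemma 2.1(ii)], to get a natural isomorphism
$$H^{i}_{\widehat{\fa}}(M\otimes_R\widehat{R},\,N\otimes_R\widehat{R})\;\cong\;H^{i}_{\fa}(M,N)\otimes_R\widehat{R}$$
for all $i\geq 0$. Combining with the previous paragraph, $H^{i}_{\fa}(M,N)\otimes_R\widehat{R}$ is $\widehat{\fa}$-cofinite over $\widehat{R}$. It then remains to transfer this property back to $R$: the module $H^{i}_{\fa}(M,N)$ is $\fa$-torsion (its support lies in $V(\fa)$, which one sees from the defining direct limit of $\Ext$ modules of $\fa$-torsion modules, or from the spectral sequence $(*)$), and for an $\fa$-torsion module $Y$ over a local ring, $Y$ is $\fa$-cofinite if and only if $Y\otimes_R\widehat{R}$ is $\widehat{\fa}$-cofinite; this is exactly [Mel2; Theorem 11.5(ii)]. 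Applying that with $Y=H^{i}_{\fa}(M,N)$ yields the desired conclusion.

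The only genuine subtlety — really the place where one must be slightly careful rather than a true obstacle — is checking that the hypotheses of Corollary 3.10 are met after completion: one needs that $\widehat{\fa}$ can be taken to be a prime (or at least that cofiniteness is a radical-invariant notion, so one may replace $\widehat{\fa}$ by its radical $\fp$), and that "analytically irreducible" is being used in its standard sense, i.e. $\Spec(\widehat{R}/\widehat{\fa})$ is irreducible with $\widehat{R}/\widehat{\fa}$ reduced, equivalently $\widehat{R}/\widehat{\fa}$ a domain. Granting that, the $pd$ finiteness ascends to $\widehat{R}$ automatically and Corollary 3.10 delivers the result. Everything else — the base change isomorphism and the faithfully-flat descent of cofiniteness for torsion modules — is cited verbatim from [DST] and [Mel2], so no calculation is needed. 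I therefore expect the proof to be short, with the main thing to verify being the translation of the geometric hypothesis on $R/\fa$ into the algebraic hypothesis needed to apply Corollary 3.10.
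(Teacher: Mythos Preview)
Your proposal is correct and follows essentially the same route as the paper: pass to the completion, note that analytic irreducibility makes $\widehat{R}/\widehat{\fa}$ a complete local domain so that $\widehat{\fa}$ is prime and Corollary 3.10 applies, then use the base-change isomorphism [DST; Lemma 2.1(ii)] and descend cofiniteness via [Mel2; Theorem 11.5(ii)]. Your extra caution about ``up to radical'' is unnecessary once one accepts the standard meaning of analytically irreducible (so $\widehat{\fa}$ is genuinely prime), but it does no harm.
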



\begin{corollary} Let $(R,\fm)$ be a complete local ring and
$\fa$ be an ideal of $R$ with $dimR/\fa=1$. Suppose also that for
each $\fa$-cofinite $R$-module $L$ and each minimal member of
$V(\fa)$, say $\fp$, $\Gamma_{\fp}(L)$ and $H^{1}_{\fp}(L)$ are
$\fp$-cofinite. Then $H^{i}_{\fa}(M,N)$ is $\fa$-cofinite for all
$i\geq 0$ whenever $M,N$ are finite $R$-modules with $pd(M)<\infty$.

\begin{proof} By [Mel; Proposition 7.12], our assumption on $\fa$ implies
that the category of $\fa$-cofinite $R$-modules is Abelian. Now, the
assertion follows from Theorem 3.7 and the mentioned theorem of
Yoshida and, Delfino and Marley.
\end{proof}
\end{corollary}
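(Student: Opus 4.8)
The plan is to reduce the statement to a situation covered by Theorem 3.7, namely to the assertion that the category of $\fa$-cofinite $R$-modules is Abelian together with the statement that $H^{i}_{\fa}(N)$ is $\fa$-cofinite for all $i\geq 0$ and all finite $R$-modules $N$. First I would invoke [Mel; Proposition 7.12]: since $\dim R/\fa=1$ and, for each minimal prime $\fp$ of $V(\fa)$ and each $\fa$-cofinite $R$-module $L$, both $\Gamma_{\fp}(L)$ and $H^{1}_{\fp}(L)$ are $\fp$-cofinite, that proposition yields exactly the conclusion that the category of $\fa$-cofinite $R$-modules is Abelian. This is the step that consumes the special hypothesis on $\fa$; the rest is then formal.

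Next I would supply the second hypothesis needed by Theorem 3.7, i.e. the $\fa$-cofiniteness of all ordinary local cohomology modules $H^{i}_{\fa}(N)$ of a finite module $N$. Here I would cite the theorem of Yoshida [Yo] and of Delfino and Marley [DM] quoted in the paragraph preceding Theorem 3.7: in a (complete) local ring with $\dim R/\fa=1$, $H^{i}_{\fa}(N)$ is $\fa$-cofinite for every $i\geq 0$ and every finite $R$-module $N$. Combined with the Abelian-category statement from the previous paragraph, both hypotheses of Theorem 3.7 are in force.

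Finally I would apply Theorem 3.7 directly: with $M$ finite of finite projective dimension, $N$ finite, $H^{i}_{\fa}(N)$ $\fa$-cofinite for all $i$, and the category of $\fa$-cofinite $R$-modules Abelian, the theorem gives that $H^{i}_{\fa}(M,N)$ is $\fa$-cofinite for all $i\geq 0$, which is exactly the assertion. The only genuine obstacle is verifying that [Mel; Proposition 7.12] applies verbatim, i.e. checking that the hypothesis stated in the corollary (cofiniteness of $\Gamma_{\fp}(L)$ and $H^{1}_{\fp}(L)$ for minimal $\fp\in V(\fa)$) is precisely the input Melkersson's proposition requires; everything downstream is a mechanical citation of earlier results.

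\begin{proof} By [Mel; Proposition 7.12], our assumption on $\fa$ implies
that the category of $\fa$-cofinite $R$-modules is Abelian. Now, the
assertion follows from Theorem 3.7 and the mentioned theorem of
Yoshida and, Delfino and Marley.
\end{proof}
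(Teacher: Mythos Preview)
Your proposal is correct and matches the paper's proof essentially verbatim: invoke [Mel; Proposition 7.12] to get that the category of $\fa$-cofinite modules is Abelian, then combine with the Yoshida/Delfino--Marley cofiniteness result for ordinary local cohomology and apply Theorem 3.7. There is nothing to add.
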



{\bf Acknowledgments.}  The authors wish to thank the referee for
his/her suggestions which brought improvement in the presentation of
this paper.


\providecommand{\bysame}{\leavevmode\hbox
to3em{\hrulefill}\thinspace}

\end{document}